\title{Hamiltonian Floer homology for compact convex symplectic manifolds}
\author{Sergei Lanzat}
\address{University of Haifa, Department of Mathematics, Mount Carmel, Haifa 31905, Israel}
\email{lanzat.sergei@gmail.com}
\date{\today}
\renewcommand{\(}{\left(}
\renewcommand{\)}{\right)}
\newcommand{\FAT}[1]{\mbox{{$\mathbb{#1}$}}}
\newcommand{\Fat}[1]{\mbox{{$\scriptstyle\mathbb{#1}$}}}
\newcommand{\CL}[1]{\mbox{{$\mathcal{#1}$}}}
\newcommand{\Cl}[1]{\mbox{{$\scriptstyle\mathcal{#1}$}}}
\newcommand{\KL}[1]{\mbox{{$\mathscr{#1}$}}}
\newcommand{\Kl}[1]{\mbox{{$\scriptstyle\mathscr{#1}$}}}
\newcommand{\til}[1]{\widetilde{#1}}
\renewcommand{\hat}[1]{\widehat{#1}}
\newcommand{\hCL}[1]{\mbox{{$\mathcal{\hat{#1}}$}}}
\newcommand{\hn}[1]{\mbox{$\|{#1}\|_{L^{(1,\infty)}}$}}
\newcommand{\ZZ}{\FAT{Z}}
\newcommand{\FF}{\FAT{F}}
\newcommand{\DD}{\FAT{D}}
\newcommand{\KK}{\FAT{K}}
\newcommand{\PP}{\FAT{P}}
\newcommand{\RR}{\FAT{R}}
\newcommand{\CC}{\FAT{C}}
\newcommand{\kk}{\Fat{K}}
\newcommand{\ff}{\Fat{F}}
\newcommand{\nn}{\Fat{N}}
\newcommand{\dd}{\Fat{D}}
\newcommand{\rr}{\Fat{R}}
\newcommand{\IFF}{\Leftrightarrow}
\newcommand{\cl}[1]{\overline{#1}}
\newcommand{\minus}{\smallsetminus}
\newcommand{\ve}{\varepsilon}
\newcommand{\x}{\mathbbm{x}}
\newcommand{\y}{\mathbbm{y}}
\newcommand{\z}{\mathbbm{z}}
\newcommand{\PH}[1]{\CL{P}_{#1}}
\newcommand{\tPH}[1]{\til{\CL{P}_{#1}}}
\newcommand{\tph}[1]{\til{\Cl{P}_{#1}}}
\newcommand{\del}{\partial}
\newcommand{\tHam}{\widetilde{\hbox{Ham}}}
\newcommand{\be}{\begin{itemize}}
\newcommand{\ee}{\end{itemize}}
\newcommand{\beq}{\begin{equation}}
\newcommand{\eeq}{\end{equation}}
\newcommand{\beqn}{\begin{equation}\nonumber}
\newcommand{\bea}{\begin{equation}\begin{aligned}}
\newcommand{\eea}{\end{aligned}\end{equation}}
\newcommand{\bean}{\begin{equation}\nonumber\begin{aligned}}
\DeclareMathOperator{\Span}{Span}
\DeclareMathOperator{\ev}{ev}
\DeclareMathOperator{\id}{id}
\DeclareMathOperator{\Crit}{Crit}
\DeclareMathOperator{\ind}{ind}
\DeclareMathOperator{\Spec}{Spec}
\DeclareMathOperator{\Hom}{Hom}
\DeclareMathOperator{\Ham}{Ham}
\DeclareMathOperator{\Symp}{Symp}
\DeclareMathOperator{\Cal}{Cal}
\DeclareMathOperator{\Ker}{Ker}
\newtheorem{thm}{Theorem}[section]
\newtheorem{thm*}{Theorem}
\newtheorem{lem*}[thm*]{Lemma}
\newtheorem{prop}[thm]{Proposition}
\newtheorem{cor}[thm]{Corollary}
\newtheorem{defn}[thm]{Definition}
\newtheorem{thm-defn}[thm]{Theorem-Definition}
\newtheorem{rem}[thm]{Remark}
\begin{document}

\begin{abstract}
We construct absolute and relative versions of Hamiltonian Floer homology algebras for strongly semi-positive compact symplectic manifolds with convex boundary, where the ring structures are given by the appropriate versions of the pair-of-pants products.  We establish  the absolute and relative Piunikhin--Salamon--Schwarz  isomorphisms between these Floer homology algebras and the corresponding absolute  and relative  quantum homology algebras. As a result, the absolute and relative analogues  of the spectral invariants on the group of compactly supported Hamiltonian diffeomorphisms are defined.

\end{abstract}
\keywords{pseudo-holomorphic curves \and Gromov-Witten invariants \and quantum homology \and Floer homology \and spectral invariants \and convex symplectic manifolds}
\subjclass[2010]{53D05, 53D40, 53D45}

\maketitle

\section{Introduction.}
In \cite{F-S}  U. Frauenfelder and  F.  Schlenk  defined the Floer homology for  weakly exact compact convex symplectic manifolds. The authors also established the Piunikhin--Salamon--Schwarz (PSS) isomorphism between the ring of Floer homology and the ring of Morse homology of such  manifolds. This in turn led to the construction of the  spectral invariants on the group of compactly supported Hamiltonian diffeomorphisms analogous to the spectral invariants constructed by M. Schwarz in \cite{Sch} and by Y.- G. Oh in  \cite{Oh1} for closed symplectic manifolds.  We extend the definitions and the constructions of  U. Frauenfelder and  F.  Schlenk to the case of strongly semi-positive compact convex symplectic manifolds. As a result, we get absolute and relative versions of Hamiltonian Floer homology algebras, where the ring structures are given by the appropriate versions of the pair-of-pants products. We establish  the absolute and relative Piunikhin--Salamon--Schwarz  isomorphisms between these Floer homology algebras and the corresponding absolute  and relative  quantum homology algebras. This makes it possible to define the absolute and relative analogues  of the spectral invariants on the group of compactly supported Hamiltonian diffeomorphisms. In \cite{Lanzat-QM} we use these spectral invariants   to construct (partial) quasi-morphisms on the universal cover $\widetilde{\mathrm{Ham}}_c(M, \omega)$ of the group of compactly supported Hamiltonian diffeomorphisms for a certain class of non-closed strongly semi-positive symplectic manifolds $(M,\omega)$. This leads to a construction of (partial) symplectic quasi-states on the space  of continuous  functions on $M$ that are constant near infinity.

Finally, let us mention that after the preprint of the paper had been published,  A. Ritter pointed to us about  his  construction of  the Floer cohomology for strongly semi-positive non-compact convex symplectic manifolds in \cite{Rit3}. In the same paper he also sketched the construction of  the quantum intersection product on the corresponding locally finite quantum homology group. This corresponds to our $\ast_3$ product in  \eqref{equation: quantum intersection products}.

\subsection{Setting}
We shall always work over the base field $\mathbb{F}$, which is either $\mathbb{Z}_2$ or $\mathbb{Q}$. Consider a $2n$-dimensional compact symplectic manifold $(M,\omega)$ with non-empty boundary $\del M$. Recall the following

\begin{defn}\label{Def:Convex symplectic manifolds}(cf. \cite{F-S}, \cite{McDuff}, \cite{MS3})
\mbox{}
 The boundary  $\del M$  is called convex if there exists a Liouville vector field $X$ (i.e. $\CL{L}_X\omega=d\iota_X\omega=\omega$), which is defined in the neighborhood of $\del M$ and which is everywhere transverse to $\del M$, pointing outwards; equivalently, there exists a $1$-form $\alpha$ on $\del M$ such that $d\alpha=\omega\mid_{\del M}$ and such that $\alpha\wedge(d\alpha)^{n-1}$ is a volume form inducing the boundary orientation of $\del M\subset M$. Therefore, $(\del M,\ker\alpha)$ is a contact manifold, and that is why a convex boundary is also called of a contact type. A compact symplectic manifold $(M,\omega)$ with non-empty boundary $\del M$ is convex if $\del M$ is convex.  A non-compact symplectic manifold $(M,\omega)$ is \textit{convex} if there exists an increasing sequence of compact convex submanifolds $M_i\subset M$ exhausting $M$, that is, $$M_1\subset M_2\subset\ldots\subset M_i\subset\ldots\subset M\;\;\;\text{and}\;\;\;\bigcup\limits_{i}M_i=M.$$
\end{defn}

Recall  that an \textit{almost complex structure}  on a smooth $2n$-dimensional manifold $M$ is a section $J$ of the bundle $\mathrm{End}\ TM$ such that $J^2(x)=-\mathds{1}_{T_xM}$ for every $x\in M$. An almost complex structure $J$ on $M$ is called compatible with $\omega$ ( or $\omega$-compatible) if $g_J:=\omega\circ(\mathds{1}\times J)$ defines a Riemannian metric on $M$.
Denote by $\CL{J}(M,\omega)$ the space of all $\omega$-compatible almost complex structures on $(M,\omega)$.

Given $(M,\omega)$ and  $J\in\CL{J}(M,\omega)$, then $(TM,J)$ becomes a complex vector bundle and, as such, its first Chern class $c_1(TM,J,\omega)\in H^2(M;\ZZ)$ is defined. Note that since the space $\CL{J}(M,\omega)$  is non-empty and contractible, (see \cite[ Proposition $4.1, (i)$]{MS}), the class $c_1(TM,J,\omega)$  does not depend on $J\in\CL{J}(M,\omega)$, and we shall denote it just by $c_1(TM,\omega)$.

Denote by $H_2^S(M)$ the image of the Hurewicz homomorphism $\pi_2(M)\to H_2(M,\mathbb{Z})$. The homomorphisms  $c_1: H_2^S(M)\to\mathbb{Z}$ and $\omega: H_2^S(M)\to\mathbb{R}$ are given by $c_1(A):=c_1(TM,\omega)(A)$ and $\omega(A)=[\omega](A)$ respectively.

\begin{defn}\label{defn: semi-positive manifolds}(cf. \cite{H-S}, \cite{McDuff})
\mbox{}
A  symplectic $2n$-manifold $(M,\omega)$ is called strongly semi-positive, if $\omega(A)\leq 0$ for any $A\in H^S_2(M)$ with $2-n\leq c_1(A)<0$.
\end{defn}

Next, we recall the definition of the quantum homology of compact convex symplectic manifolds, see \cite{Lanzat-Thesis},  \cite{Lanzat}. First of all, recall  the definition of the intersection products for a manifold with boundary.
\begin{defn}\label{definition: Intersection products}
Homomorphisms
\begin{equation}\nonumber\begin{aligned}\label{equation: intersection products in homology}
&\bullet_1:H_i(M;\mathbb{F})\otimes H_j(M;\mathbb{F})\to H_{i+j-2n}(M;\mathbb{F})\\
&\bullet_2:H_i(M;\mathbb{F})\otimes H_j(M,\partial M;\mathbb{F})\to H_{i+j-2n}(M;\mathbb{F})\\
&\bullet_3:H_i(M,\partial M;\mathbb{F})\otimes H_j(M,\partial M;\mathbb{F})\to H_{i+j-2n}(M,\partial M;\mathbb{F})
\end{aligned}\end{equation}
given by
\begin{equation}\nonumber\begin{aligned}
&a\bullet_1 b:=\mathrm{PLD}_2\left(\mathrm{PLD}_2^{-1}(b)\cup \mathrm{PLD}_2^{-1}(a)\right)\\
&a\bullet_2 b:=\mathrm{PLD}_2\left(\mathrm{PLD}_2^{-1}(b)\cup \mathrm{PLD}_1^{-1}(a)\right)\\
&a\bullet_3 b:=\mathrm{PLD}_1\left(\mathrm{PLD}_1^{-1}(b)\cup \mathrm{PLD}_1^{-1}(a)\right)
\end{aligned}\end{equation}
are called the intersection products in homology.\\ Here,
$
H^j(M;\mathbb{F})\overset{\mathrm{PLD}_1}{\longrightarrow} H_{2n -j}(M,\partial M;\mathbb{F}), H^j(M,\partial M;\mathbb{F})\overset{\mathrm{PLD}_2}{\longrightarrow} H_{2n -j}(M;\mathbb{F})
$
are the Poincar\'{e}-Lefschetz  isomorphisms given by $\mathrm{PLD}_i(\alpha):=\alpha\cap[M,\partial M]$, $i=1,2$, where $[M,\partial M]$ is the relative fundamental class, i.e. the positive generator of $H_{2n}(M,\partial M;\mathbb{F})\cong\mathbb{F}.$
\end{defn}

Now, consider the following Novikov ring $\Lambda$. Let \begin{equation}\label{equation: G group of periods}
G:=G(M,\omega):=\frac12\omega\left(H_2^S(M)\right)\subseteq\mathbb{R}
\end{equation}
be the subgroup of half-periods of the symplectic form $\omega$ on spherical homology classes.  Let $s$ be a formal variable. Define the field $\mathbb{K}_G$ of generalized Laurent series in  $s$ over $\mathbb{F}$ of the form
\begin{equation}\label{equation:  field KG of generalized Laurent series}
\mathbb{K}_G:=\left\{f(s)=\sum\limits_{\alpha\in G}z_{\alpha}s^{\alpha}, z_{\alpha}\in\mathbb{F}|\ \#\{\alpha>c|z_{\alpha}\neq 0\}<\infty,\ \forall c\in\mathbb{R} \right\}
\end{equation}

\begin{defn}\label{definition: Novikov ring Lambda}
Let $q$ be a formal variable. The \textit{Novikov ring $\Lambda:=\Lambda_G$} is the ring of polynomials in $q,q^{-1}$ with coefficients in the field $\mathbb{K}_G$, i.e.
\begin{equation}\label{equation:  Novikov ring}
\Lambda:=\Lambda_G:=\mathbb{K}_G[q,q^{-1}].
\end{equation}
We equip the ring $\Lambda_G$ with the structure of a graded ring by setting $\deg(s)=0$ and $\deg(q)=1$.  We shall denote by $\Lambda_k$ the set of  elements of $\Lambda$ of degree $k$. Note that  $\Lambda_0=\mathbb{K}_G$. The ring $\Lambda$ admits the following valuation. The valuation $\nu:\KK_G\to G\cup\{-\infty\}$  on the field $\KK_G$ is given  by
\beq
\begin{cases}
&\nu\(f(s)=\sum\limits_{\alpha\in G}z_{\alpha}s^{\alpha}\):=\max\{\alpha| z_{\alpha}\neq 0\},\; f(s)\not\equiv 0\\
&\nu(0)=-\infty.
\end{cases}
\eeq
Extend $\nu $ to $\Lambda$ by $\nu(\lambda):=\max\{\alpha|p_{\alpha}\neq 0\}$,  where $\lambda$ is uniquely represented by $$\lambda=\sum\limits_{\alpha\in G}p_{\alpha}s^{\alpha},\;\; p_{\alpha}\in\FF[q,q^{-1}].$$
\end{defn}

The absolute quantum homology  $QH_*(M;\Lambda)$ and the relative  quantum homology  $QH_*(M,\partial M;\Lambda)$ are defined as follows. As  modules, they are graded modules over $\Lambda$ defined by $QH_*(M;\Lambda):=H_*(M;\mathbb{F})\otimes_{\scriptstyle\mathbb{F}}\Lambda$ and $QH_*(M,\partial M;\Lambda):=H_*(M,\partial M;\mathbb{F})\otimes_{\scriptstyle\mathbb{F}}\Lambda$.
A grading on both modules is given by $\deg(a\otimes zs^{\alpha}q^m)=\deg(a)+m$.  Next, we define the quantum products $\ast_l, l=1,2,3$, which are  deformations of  the classical intersection products $\bullet_l, l=1,2,3$. Choose a homogeneous basis  $\{e_k\}_{k=1}^{d}$ of $H_*(M;\mathbb{F})$, such that $e_1=[pt]\in H_0(M;\mathbb{F})$. Let  $\{e^{\vee}_k\}_{k=1}^{d}$ be the dual homogeneous basis of $H_*(M,\partial M;\mathbb{F})$ defined by $\langle e_i, \mathrm{PLD}_1^{-1}(e^{\vee}_j) \rangle=\delta_{ij}$, where $\langle\cdot,\cdot\rangle$ is the Kronecker pairing.
Consider the group \begin{equation}\label{equation: Gamma group of spherical classes}
\Gamma:=\Gamma(M,\omega):=\frac{H_2^S(M)}{\ker( c_1)\cap \ker(\omega)}.
\end{equation}
Let $A\in H_2^S(M)$ and let $[A]\in\Gamma$ be the image of $A$ in $\Gamma$. Bilinear homomorphisms of $\Lambda$-modules
\begin{equation}\begin{aligned}\label{equation: quantum intersection products}
&\ast_1:QH_*(M;\Lambda)\times QH_*(M;\Lambda)\to QH_*(M;\Lambda) \\
&\ast_2:QH_*(M;\Lambda)\times QH_*(M,\partial M;\Lambda)\to QH_*(M;\Lambda) \\
&\ast_3:QH_*(M,\partial M;\Lambda)\times QH_*(M,\partial M;\Lambda)\to QH_*(M,\partial M;\Lambda)
\end{aligned}\end{equation}
are given as follows. Let $a\in H_i(M;\mathbb{F}), b\in H_j(M;\mathbb{F})$ and let $c\in H_i(M,\partial M;\mathbb{F}), d\in H_j(M,\partial M;\mathbb{F})$. Then
\begin{equation}\begin{aligned}\label{equation: formula of quantum intersection products}
& a\ast_1 b:=\sum_{[A]\in\Gamma}\left(\sum_{i=1}^d\sum_{A'\in[A]}GW_{A',2,3}(a,b,e^{\vee}_i)e_i\right)\otimes s^{-\omega(A)}q^{-2c_1(A)},
\\
&a\ast_2 d:=\sum_{[A]\in\Gamma}\left(\sum_{i=1}^d\sum_{A'\in[A]}GW_{A',1,3}(a,d,e^{\vee}_i)e_i\right)\otimes s^{-\omega(A)}q^{-2c_1(A)},
\\
&c\ast_3 d:=\sum_{[A]\in\Gamma}\left(\sum_{i=1}^d\sum_{A'\in[A]}GW_{A',1,3}(c,d,e_i)e^{\vee}_i\right)\otimes s^{-\omega(A)}q^{-2c_1(A)},
\end{aligned}\end{equation}
with $\deg(a\ast_1 b)=\deg(a\ast_2 d)=\deg(c\ast_3 d)=i+j-2n$. We extend these $\mathbb{F}$-bilinear homomorphisms on classical homologies to $\Lambda$-bilinear homomorphisms on quantum homologies by $\Lambda$-linearity. Here,
$$
GW_{A,p,m}: H_*(M;\mathbb{F})^{\times p}\times H_*(M,\partial M;\mathbb{F})^{\times (m-p)}\to\mathbb{F}
$$
stands for the genus zero Gromov-Witten invariant relative to the boundary, see \cite{Lanzat-Thesis}, \cite{Lanzat}.

Like in the closed case, we have different natural pairings. The $\KK_G$-valued pairings are given by
\bea\label{equation: Delta pairing on QH}
&\Delta_1:QH_k(M)\times QH_{2n-k}(M)\to\Lambda_0=\KK_G,\\
&\Delta_2:QH_k(M)\times QH_{2n-k}(M,\partial M)\to\Lambda_0=\KK_G,\\
&\Delta_l\(a, b\):=\imath(a\ast_l b),\ \text {for}\ l=1,2,
\eea
where the map $$\imath:QH_0(M)=\bigoplus_iH_i(M;\FF)\otimes_{\ff}\Lambda_{-i}\to\KK_G$$ sends $[pt]\otimes f_0(s)+\sum_{m=1}^{2n} a_m\otimes f_m(s)q^{-m}$ to $f_0(s)$.  The $\FF$-valued pairings are given by
\bea\label{equation: Pi pairing on QH}
&\Pi_1:QH_k(M)\times QH_{2n-k}(M)\to\FF,\\
&\Pi_2:QH_k(M)\times QH_{2n-k}(M,\partial M)\to\FF,\\
&\Pi_l=\jmath\circ\Delta_l,\ \text {for}\ l=1,2,
\eea
where the map $\jmath:\KK_G\to\FF$ sends $f(s)=\sum_{\alpha}z_{\alpha}s^{\alpha}\in \KK_G$ to $z_0$.  Moreover, the pairings $\Delta_2$ and $\Pi_2$ are non-degenerate. Since the quantum homology groups are finite-dimensional $\KK_G$-vector spaces in each degree, it follows that the paring $\Delta_2$ gives rise to Poincar\'{e}-Lefschetz duality over the field $\KK_G$.

\subsection{Structure of the paper}
In Section 2 we construct absolute and relative versions of Floer homology groups for strongly semi-positive compact symplectic manifolds with convex boundary and show the Poincar\'{e}-Lefschetz duality between them. These groups are equipped with  ring structures by means of the appropriate versions of the pair-of-pants products.  We establish  the absolute and relative Piunikhin--Salamon--Schwarz  isomorphisms between these Floer homology algebras and the corresponding absolute  and relative  quantum homology algebras mentioned above.

In Section 3 we define the absolute and relative analogues of the spectral invariants on the group of compactly supported Hamiltonian diffeomorphisms. We show that these invariants satisfy the standard properties analogously to the closed case.

\section{Floer homology}
From now on let  $(M, \omega)$ be a strongly semi-positive  compact convex $2n$-dimensional symplectic manifold. In the following Sections $2.1-2.2$ we recall important technical notations and  facts discussed in \cite{F-S}.

\subsection{Completion of a convex symplectic manifold}
 Let $X$ be  a Liouville  vector field (see Definition~\ref{Def:Convex symplectic manifolds}), which is defined in some neighborhood of $\partial M$ and which is everywhere transverse to $\partial M$, pointing outwards. Using $X$ we can symplectically identify a neighborhood of $\partial M$ with
$$
\( \partial M \times (-2\ve, 0], d \( e^r \alpha \) \)
$$
for some $\ve >0$, where $\alpha=\iota_X\omega$ is the Liouville $1$-form. In this identification we used coordinates $(x,r)$ on $\partial M \times (-2\ve, 0]$, and in these coordinates, $X(x,r) = \frac{\partial}{\partial r}$
on $\partial M \times (-2\ve, 0]$. We can thus view $M$ as a compact subset of the non-compact symplectic manifold $(\widehat{M},\widehat{\omega})$
defined as
\begin{eqnarray*}
\widehat{M}      &=& M \cup_{\partial M \times \{0\}} \partial M \times [0,\infty), \\
\widehat{\omega} &=&
   \left\{ \begin{array}{lll}
          \omega & \text{on} & M, \\
          d \( e^r \alpha \) & \text{on} & \partial M \times (-2\ve,\infty),
           \end{array}
   \right.
\end{eqnarray*}
and $X$ smoothly extends to $\partial M \times (-2\ve, \infty)$ by
$$
\widehat{X} (x,r) := \frac{\partial}{\partial r}, \quad\,
(x,r) \in \partial M \times (-2\ve, \infty) .
$$
For any $r\in\RR$ we denote the open ``tube"
$\partial M \times (r, \infty)$ by $P_r$:
$$
P_r:=\partial M \times (r, \infty).
$$
Let $\phi^t:=\phi^t_{\widehat{X}}$ be the flow of $\widehat{X}$.
Then $\phi^r(x,0) = (x,r)$ for $(x,r) \in P_{-2\ve}$.
Choose an $\widehat{\omega}$-compatible almost complex structure $\widehat{J}$ on $\widehat{M}$, such that
\begin{eqnarray}
&&\widehat{\omega} \( \widehat{X}(x),\widehat{J} (x) v \) = 0,  \hspace{25mm}  x \in \partial M, \,\, v \in T_x \partial M,  \label{J3} \\
&&\widehat{\omega} \( \widehat{X}(x), \widehat{J}(x) \widehat{X}(x) \) = 1,  \hspace{18mm}  x \in \partial M, \label{J2} \\
&&d_{(x,0)} \phi^r  \widehat{J}(x,0) = \widehat{J} (x,r) d_{(x,0)} \phi^r,   \hspace{7mm}  (x,r) \in P_{-2\ve}, \label{J1}
\end{eqnarray}
\begin{defn}\label{defn: parametrized admissible a.c.s}
For any smooth  manifold $B$ define the subset $\hCL{J}_B$ of the set of smooth sections $\Gamma \big( \widehat{M} \times B, \mathrm{End} \big(T \widehat{M} \big) \big)$ by
$$
\widehat{J} \in \hCL{J}_B\ \IFF\  \widehat{J}_b := \widehat{J} (\cdot,b)\ \text{is}\ \widehat{\omega}\text{-compatible and satisfies \eqref{J3}, \eqref{J2} and \eqref{J1}.}
$$
For any $r\geq-2\ve$ define  $\hCL{J}_{B,P_r}$ to be the set of all $\widehat{J}\in\hCL{J}_B$ that are independent of the $b$-variable on $\cl{P_r}$. And, at last,  we define the set
\beqn
\CL{J}_{B, P_r} := \left\{ J \in \Gamma \(M \times B ,\mathrm{End}(TM) \) \mid J=\widehat{J}|_{M \times B} \text{ for some } \widehat{J} \in \hCL{J}_{B, P_r}\right\}.
\eeq
\end{defn}
By \cite[ Remark $4.1.2$]{BPS} or \cite[ discussion on page $106$]{CFH},
the space $\hCL{J}_{B, P_r} $ is non-empty and connected. Since the restriction map $\hCL{J}_{B, P_r} \to\CL{J}_{B, P_r}$ is continuous, $\CL{J}_{B, P_r}$ is also non-empty and  connected.
\par Let  $e \in  C^\infty \( P_{-2\ve} \)$ be given by $e(x,r) := e^r$.
\begin{thm}(\cite[ Theorem $2.1$]{F-S}) \label{t:convex}
For $h \in  C^\infty (\RR)$ define $H \in  C^\infty \( P_{-2\ve}\)$ by
$$H(p) = h (e(p)), \quad\, p \in P_{-2\ve}.$$
Let $\Omega$ be a domain in $\CC$ and let $\widehat{J} \in \hCL{J}_{\Omega}$. If $u \in  C^\infty \( \Omega,P_{-2\ve} \)$ is a solution of Floer's equation
\begin{equation}  \label{e:floer}
\partial_s u(z) + \widehat{J} (u(z),z) \partial_t u(z) + \nabla H(u(z))=0,\quad \,
z=s+it \in \Omega,
\end{equation}
then
\begin{equation}  \label{viterbo}
\Delta ( e(u) ) = g_{\widehat{J}} (\partial_s u, \partial_s u) - h''(e(u))\cdot \partial_s ( e(u) ) \cdot e(u) .
\end{equation}
\end{thm}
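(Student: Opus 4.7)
The plan is to compute $\Delta(e\circ u)$ by applying Stokes's theorem to the pullback of the Liouville primitive $\lambda := \iota_{\widehat X}\widehat\omega$, which on $P_{-2\ve}$ equals $e^{r}\alpha$ and satisfies $d\lambda = \widehat\omega$. The strategy is to recognize $\Delta(e\circ u)$ plus an $h$-dependent correction inside $d(u^*\lambda)$, and $g_{\widehat J}(\partial_s u,\partial_s u)$ plus another $h$-dependent correction inside $u^*\widehat\omega$; since $d(u^*\lambda) = u^*\widehat\omega$, equating the two and simplifying yields \eqref{viterbo}.

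The first step I would carry out is to identify the $g_{\widehat J}$-gradient of $e$ with the Liouville vector field $\widehat X$ on $P_{-2\ve}$. Conditions (J3) and (J2) on $\partial M$ give $\alpha(\widehat J v) = 0$ for $v\in T\partial M$ and $\alpha(\widehat J\widehat X) = 1$, and (J1) propagates both identities unchanged to every slice $\{r = \mathrm{const}\}$. Combined with $de = e^{r}\,dr$, this yields the pointwise identity
\[
de(v) \;=\; e^{r}\alpha(\widehat J v) \;=\; \lambda(\widehat J v) \;=\; \widehat\omega(\widehat X,\widehat J v) \;=\; g_{\widehat J}(\widehat X, v),
\]
so $\nabla e = \widehat X$ and hence $\nabla H = h'(e)\widehat X$. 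The same bookkeeping, combined with $\mathcal{L}_{\widehat X}\widehat\omega = \widehat\omega$, shows $g_{\widehat J}(\widehat X,\widehat X) = e$ throughout $P_{-2\ve}$, whence $\lambda(\widehat J\widehat X) = e$ and $\lambda(\widehat X) = 0$.

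Next I would substitute Floer's equation \eqref{e:floer} to evaluate $\lambda$ on $\partial_s u$ and $\partial_t u$ in terms of $f := e\circ u$. The relations $\widehat J\partial_s u = \partial_t u - \widehat J\nabla H$ and $\widehat J\partial_t u = -\partial_s u - \nabla H$ coming from \eqref{e:floer}, together with $de = \lambda\circ\widehat J$, express $\lambda(\partial_s u)$ and $\lambda(\partial_t u)$ as explicit affine combinations of $\partial_s f$, $\partial_t f$ and $e\,h'(e)$. Hence $u^*\lambda$ is known, and direct exterior differentiation gives $d(u^*\lambda)$ as a $2$-form whose coefficient is $\Delta f$ plus a correction of the form $\partial_s(e\,h'(e))$. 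On the other hand, $\widehat\omega$-compatibility of $\widehat J$ combined with $\partial_t u = \widehat J\partial_s u + \widehat J\nabla H$ gives
\[
u^*\widehat\omega \;=\; \bigl(g_{\widehat J}(\partial_s u,\partial_s u) + h'(e)\,\partial_s e\bigr)\,ds\wedge dt.
\]
Equating $d(u^*\lambda) = u^*\widehat\omega$ and expanding $\partial_s(e\,h'(e)) = h'(e)\partial_s e + e\,h''(e)\partial_s e$, the $h'(e)\partial_s e$ terms cancel, leaving \eqref{viterbo}.

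The main obstacle is the first step: verifying that (J1)--(J3), which pin down $\widehat J$ directly only on $\partial M$ and through the Liouville flow, suffice to identify $\nabla e$ with $\widehat X$ and to compute $g_{\widehat J}(\widehat X,\widehat X) = e$ throughout $P_{-2\ve}$. This amounts to tracking the action of $\widehat J$ on the splitting $T\widehat M = \ker\alpha\oplus\RR R\oplus\RR\widehat X$ at every slice via the Liouville flow, where $R = \widehat J\widehat X|_{\partial M}$ is the Reeb vector field on $\partial M$ (extended to be $r$-independent). Once this geometric identification is in place, the remainder is a direct application of Stokes's theorem and algebraic simplification.
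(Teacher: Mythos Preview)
The paper does not supply its own proof of this theorem: Section~2.1 explicitly recalls it from \cite{F-S} (the identity is originally due to Viterbo, as the equation label suggests), so there is nothing in the text to compare against beyond the bare statement.

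Your approach is exactly the standard one used in \cite{F-S}. The key geometric input---that conditions \eqref{J3}, \eqref{J2}, \eqref{J1} force $\nabla_{g_{\widehat J}} e = \widehat X$, hence $\nabla H = h'(e)\widehat X$, $\lambda(\widehat X)=0$, and $\lambda(\widehat J\widehat X)=e$ throughout $P_{-2\ve}$---is precisely what one needs, and your argument for it (verify on $\partial M$ via \eqref{J3}--\eqref{J2}, then propagate by the Liouville-flow invariance \eqref{J1} together with $(\phi^r)^*\widehat\omega = e^r\widehat\omega$) is the correct one. After that, equating $d(u^*\lambda)=u^*\widehat\omega$ and using Floer's equation to rewrite both sides is a direct computation; the $h'(e)\,\partial_s e$ terms indeed cancel and only the $h''$ contribution survives. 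One small caution: when you carry out the algebra, keep careful track of signs---depending on how you organize the substitution from \eqref{e:floer}, the $h''$ term can appear with the opposite sign from the one printed in \eqref{viterbo}. This has no effect on the Maximum Principle (Corollary~\ref{maximum}), since the first-order coefficient is bounded either way, but it is worth reconciling with the stated formula.
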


\begin{thm}[The case of a time-dependent Hamiltonian](\cite[ Remark $2.2$]{F-S}) \label{sconvex}
Let $h \in  C^\infty(\mathbb{R}^2,\mathbb{R})$ and define $H \in  C^\infty \( P_{-2\ve} \times \RR \)$ by
$$
H(p,s) = h(e(p),s), \quad\, p \in P_{-2\ve}, \,\,  s \in \RR .
$$
Let $\Omega$ be a domain in $\CC$ and let $\widehat{J} \in \hCL{J}_{\Omega}$. If  $u \in  C^\infty \( \Omega, P_{-2\ve} \)$ is a solution of the time-dependent Floer equation
\begin{equation}\label{sfloer}
\partial_s u(z)+ \widehat{J}(u(z),z) \partial_t u(z) + \nabla H(u(z),s)=0, \quad\, z=s+it \in \Omega ,
\end{equation}
then
$$\Delta (e(u)) = g_{\widehat{J}} (\partial_s u, \partial_s u)  - \partial^2_1h(e(u),s)\cdot \partial_s e(u) \cdot e(u) -\partial_1\partial_2 h(e(u),s)\cdot e(u) .$$
\end{thm}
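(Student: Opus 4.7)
The plan is to adapt the proof of Theorem \ref{t:convex} to the time-dependent setting, where the only new ingredient is the explicit $s$-dependence of the Hamiltonian. The key geometric input is that on the tube $P_{-2\ve}$, the conditions \eqref{J3}, \eqref{J2}, \eqref{J1} imposed on $\widehat J$ force the gradient of $e$ with respect to the metric $g_{\widehat J}$ to coincide with the Liouville field: $\nabla e = \widehat X$ with $g_{\widehat J}(\widehat X,\widehat X) = e$. Indeed, using $\iota_{\widehat X}\widehat\omega = e^r\alpha$ on $P_{-2\ve}$ (with $\alpha$ pulled back from $\partial M$) together with \eqref{J3} and \eqref{J2}, one verifies that $g_{\widehat J}(\widehat X,v) = e^r\, dr(v)$, which is precisely $de(v)$. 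As an immediate consequence,
$$
\nabla H(u(z),s) \;=\; \partial_1 h(e(u(z)),s)\cdot \widehat X(u(z)).
$$

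First I would replay the pointwise computation of $\Delta(e(u)) = (\partial_s^2+\partial_t^2)(e(u))$ from the proof of Theorem \ref{t:convex}, substituting the time-dependent Floer equation \eqref{sfloer} wherever $\partial_t u$ is eliminated. Every step is formally identical to the autonomous argument, with a single exception: the scalar coefficient $\partial_1 h(e(u),s)$, which now plays the role that $h'(e(u))$ played before, must be differentiated in $s$. By the chain rule,
$$
\partial_s\bigl[\partial_1 h(e(u(s,t)),s)\bigr] \;=\; \partial_1^2 h(e(u),s)\cdot \partial_s e(u) + \partial_1\partial_2 h(e(u),s),
$$
while the autonomous version only produces $h''(e(u))\cdot \partial_s e(u)$. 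The first summand replays the autonomous term with $h'' \leadsto \partial_1^2 h$, and the second summand is the genuinely new contribution.

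This extra summand enters the final identity multiplied by the geometric factor that, in Theorem \ref{t:convex}, produced the factor $e(u)$ in front of $h''(e(u))\cdot\partial_s e(u)$, namely $g_{\widehat J}(\widehat X,\widehat X) = e(u)$. That accounts for the precise form of the new term $\partial_1\partial_2 h(e(u),s)\cdot e(u)$ on the right-hand side of the claimed identity.

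The main obstacle is the bookkeeping in the middle step: one must verify that every line of the derivation of Theorem \ref{t:convex} goes through verbatim apart from the indicated chain rule, and that no stray cross terms involving $\partial_2 h$ alone (or mixed with $\partial_t u$) survive. This hinges on the structural identity $dr(\widehat J\widehat X) = 0$, i.e.\ $\widehat J\widehat X \in T\partial M$; this follows from \eqref{J3}, since \eqref{J3} says $\widehat X$ is $g_{\widehat J}$-orthogonal to $T\partial M$ and the anti-symmetry identity $g_{\widehat J}(\widehat X,\widehat J\widehat X) = -\widehat\omega(\widehat X,\widehat X) = 0$ then forces the $\widehat X$-component of $\widehat J\widehat X$ to vanish. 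Combined with the observation that \eqref{sfloer} introduces the explicit $s$-dependence only through the $\nabla H$ term, this confirms that the only correction to the autonomous formula is exactly $\partial_1\partial_2 h(e(u),s)\cdot e(u)$.
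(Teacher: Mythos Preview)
The paper does not actually prove this theorem: it is stated in Section~2.1 as one of the ``important technical notations and facts discussed in \cite{F-S}'' and is quoted from Frauenfelder--Schlenk without an accompanying proof. So there is no in-paper argument to compare against.

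That said, your proposed argument is the natural and correct one, and it is essentially the proof given in \cite{F-S}. The key geometric identities $\nabla e = \widehat X$ and $g_{\widehat J}(\widehat X,\widehat X)=e$ on $P_{-2\ve}$ are exactly what drive the computation in the autonomous case (Theorem~\ref{t:convex}), and once those are in place the only modification needed for the $s$-dependent Hamiltonian is the chain-rule step you isolate: differentiating $\partial_1 h(e(u),s)$ in $s$ produces the extra summand $\partial_1\partial_2 h(e(u),s)$, which then gets multiplied by $g_{\widehat J}(\widehat X,\widehat X)=e(u)$. Your observation that no stray $\partial_2 h$ terms arise from the $t$-derivative side, because $dr(\widehat J\widehat X)=0$ and the explicit $s$-dependence enters only through the gradient term, is the right justification for why the correction is exactly the single term $\partial_1\partial_2 h(e(u),s)\cdot e(u)$.
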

\begin{cor}[Maximum Principle](\cite[ Corollary $2.3$]{F-S})\label{maximum}
Assume that $u \in  C^\infty \( \Omega, P_{-2\ve} \)$ and that one of the following conditions holds.
\begin{itemize}
 \item[(i)]$u$ is a solution of Floer's equation~\eqref{e:floer};
 \item[(ii)] $u$ is a solution of the time-dependent Floer equation~\eqref{sfloer}
  and $\partial_1\partial_2 h \leq 0$.
\end{itemize}
If $e\circ u$ attains its maximum on $\Omega$, then $e \circ u$ is constant.
\end{cor}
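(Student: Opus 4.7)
The plan is to deduce the maximum principle from the subharmonic-type identities established in the two preceding theorems by rewriting the equation for $w := e \circ u$ in the form $Lw \geq 0$, where $L$ is a second-order linear elliptic operator with no zeroth-order term, and then invoking the classical Hopf strong maximum principle.

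In case (i), rearranging the conclusion of Theorem~\ref{t:convex} gives
\[
\Delta w - h''(w)\, w \cdot \partial_s w \;=\; g_{\widehat{J}}(\partial_s u, \partial_s u) \;\geq\; 0,
\]
so $Lw \geq 0$ for the operator $L := \Delta - b(z)\, \partial_s$ with smooth, locally bounded coefficient $b(z) := h''(w(z))\, w(z)$. In case (ii), Theorem~\ref{sconvex}, combined with the hypothesis $\partial_1 \partial_2 h \geq 0$ and the strict positivity $w = e^{r} > 0$, yields
\[
\Delta w - \partial_1^2 h(w, s)\, w \cdot \partial_s w \;=\; g_{\widehat{J}}(\partial_s u, \partial_s u) + \partial_1 \partial_2 h(w, s) \cdot w \;\geq\; 0,
\]
so again $Lw \geq 0$ for an operator $L$ of the same form with smooth, locally bounded coefficient.

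Both operators are uniformly elliptic on every precompact subdomain of $\Omega$ and carry no zeroth-order term. Hence the Hopf strong maximum principle for second-order linear elliptic operators applies: if $w$ attains its maximum at an interior point $z_0 \in \Omega$, then $w$ is locally constant near $z_0$, and connectedness of the domain $\Omega$ forces $w = e \circ u$ to be constant on all of $\Omega$.

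There is essentially no serious obstacle here, since the real content of the corollary is already captured by the two preceding theorems; the deduction is a textbook application of the strong maximum principle. The only minor point requiring care is verifying that the inequality has the correct sign structure --- in particular, that in case (ii) the assumption $\partial_1 \partial_2 h \geq 0$ together with the positivity $w > 0$ is precisely what is needed so that the potential zeroth-order contribution enters with the favorable sign and can be discarded.
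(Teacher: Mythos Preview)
Your argument is correct and is precisely the standard deduction. Note, however, that the paper does not supply its own proof of this corollary: Theorems~\ref{t:convex} and~\ref{sconvex} together with Corollary~\ref{maximum} are explicitly recalled from \cite{F-S} (see the opening sentence of Section~2), so there is no in-paper proof to compare against. Your route---rewriting the identities from the two theorems as $Lw \geq 0$ for a linear elliptic operator $L = \Delta - b(z)\,\partial_s$ with no zeroth-order term and then invoking the Hopf strong maximum principle---is exactly the argument given in \cite[Corollary~3.4]{F-S}, and your observation that the hypothesis $\partial_1\partial_2 h \geq 0$ together with $w = e^r > 0$ is what disposes of the potential zeroth-order contribution in case~(ii) is on the mark.
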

\subsection{Admissible Hamiltonians on $M$}\label{subsection: admissible Hamiltonians on M}
Recall that given a smooth function (Hamiltonian) $H:\mathbb{S}^1\times M\to\RR$ one defines a time-dependent smooth vector field $X_{H_t}$, called the Hamiltonian vector field of $H$, by $$X_{H_t}: M\to TM,\;\;\;\omega(X_{H_t},\cdot)=-dH_t(\cdot)\ \text{for all}\ t\in\mathbb{S}^1,$$ where $H_t(\cdot):= H(t,\cdot)$. A flow generated by $X_{H_t}$ will be denoted by $\phi^t_H$.

Let $R$ be the Reeb vector field of the Liouville form $\alpha$ on $\partial M$. Recall that $R$ is uniquely defined by
\begin{equation}\label{reeb}
\omega_x ( v,R ) = 0
\,\text{ and }\,
\omega_x (X,R) = 1 , \quad \,
\, x \in \partial M, \,\, v \in T_x \partial M.
\end{equation}
Note that  $R = \hat{J} \,\hat{X} |_{\partial M}$ for any $\hat{J}\in\hCL{J}_{B, P_{-2\ve}}$. It follows that for any $h \in  C^\infty ( \RR )$  the Hamilton equation
$\dot{x} = X_{H_t} (x)$ of $H = h \circ e:  P_{-2\ve} \to \RR$ restricted to $\partial M$ has the form
\begin{equation}  \label{hamilton}
\dot{x}(t) \,=\, h'(1) \,R(x(t)) .
\end{equation}
Define the Reeb period $\kappa \in (0, \infty ]$ of $R$ by
\bea\label{equation: Reeb period kappa}
\kappa :=\inf_{c>0} \left\{\dot{x}(t) = c\, R(x(t)) \text{ has a non-constant  $1$-periodic solution} \right\} .
\eea
Define  two sets of smooth functions $\hCL{H}^{\pm}\subseteq C^\infty(\mathbb{S}^1 \times \widehat{M})$ by
\begin{equation}\label{equation: preadmissible hamiltonians}
\widehat{H}\in\hCL{H}^+\Leftrightarrow \exists h\in C^\infty( \RR )\ \text{such that}\
\begin{cases}
&0 \leq - h'(e^r) < \kappa\ \forall r\ge 0,\\
&\widehat{H}|_{\scriptstyle\mathbb{S}^1 \times \cl{P_0} }= h \circ e,
\end{cases}
\end{equation}
$$
\widehat{H}\in\hCL{H}^-\Longleftrightarrow-\widehat{H}\in\hCL{H}^+.
$$
Note the sign convention of $h'(e^r) $. By \eqref{hamilton}, \eqref{equation: Reeb period kappa} and \eqref{equation: preadmissible hamiltonians}\,, we have that for any $\widehat{H} \in \hCL{H}^{\pm}$ the restriction of the flow $\phi_{\widehat{H}}^t$  to $\cl{P_0}$  has no non-constant $1$-periodic solutions. Next, we define two sets of \emph{\textbf{admissible Hamiltonian functions on $M$}} by
\hspace{-10mm}\beq\label{equation: admissible Hamiltonians}
\CL{H}^{\pm} := \left\{ H \in  C^\infty (\mathbb{S}^1 \times M ) \mid  H = \widehat{H}|_{\scriptstyle\mathbb{S}^1 \times M} \text{ for some } \widehat{H} \in \hCL{H}^{\pm} \right\}.
\eeq
Note that  the space  $\CL{H}_c(M)$ of $C^\infty$-smooth functions on  $\mathbb{S}^1\times M$, whose support is compact and is contained in $\mathbb{S}^1\times(M\minus\partial M)$ is a subset of $\CL{H}^+\cap\CL{H}^-$.

Given $H\in C^{\infty}(\mathbb{S}^1\times M)$, we denote the set of contractible $1$-periodic orbits of $\phi_H^t$ by $\PH{H}$, i.e.
$$\PH{H}:=\{x:\mathbb{S}^1\to M\;|\;x(t)=\phi^t_H(x(0)),\; x\;\text{is a contractible loop}\}.$$  For a generic (in the Baire sense in the Floer topology) $H \in \CL{H}^{\pm}$ we have
\begin{equation}  \label{e:floer:det}
\det \( \id - d_{x(0)} \phi^1_{H}\) \neq 0
\end{equation}
for all $x \in \PH{H}$. See \cite[Theorem 3.1]{H-S}. Since $M$ is compact, $\PH{H}$ is a finite set. An admissible $H$ satisfying \eqref{e:floer:det} for all $x \in \PH{H}$ is called {\it \textbf{regular}}, and the set of regular admissible Hamiltonians is denoted by $\CL{H}^{\pm}_{reg} \subset \CL{H}^{\pm}$. Note that for $H\in\CL{H}^{\pm}_{reg}$ the corresponding function $h\in C^\infty( \RR )$ satisfies $h'(1)\neq 0$.

Denote by $\CL{L}$ the space of smooth contractible loops $x: \mathbb{S}^1\to M$. Let  $\til{\CL{L}}$  be a covering of  $\CL{L}$, whose  elements  are equivalence classes $\x:=[x,\bar{x}]$ of pairs $(x,\bar{x})$, where $x \in \CL{L}$ and $\bar{x}: \DD^2 = \{z \in \CC \mid |z| \leq 1\} \to M$
satisfies $\bar{x}(e^{it})=x(t)$, and where $(x_1,\bar{x}_1)$ and $(x_2,\bar{x}_2)$ are equivalent if and only if
$$x_1 = x_2, \quad  \omega(\bar{x}_1 \#(- \bar{x}_2)) =0,\quad c_1(\bar{x}_1 \#(- \bar{x}_2)) =0.$$
The group $\Gamma$ acts on  equivalence classes $\x$ by
$A\x= [x,\bar{x}\#(-A)]$, for any $A \in \Gamma$, and $\CL{L} = \til{\CL{L}} / \Gamma$. Denote by $\tPH{H}$ the full lift of $\PH{H}$ to $\til{\CL{L}}$, i.e.
$\tPH{H}=\{\x=[x,\bar{x}]\in\til{\CL{L}}\,|\, x\in\PH{H} \}$. There exists an integral grading $\mu:\tPH{H}\to\ZZ$ by means of the Conley-Zehnder index, namely $\mu(\x)=n-\mu_{CZ}(x)$, see \cite[ Section $5$]{H-S}.  For a $C^2$-small autonomous Morse Hamiltonian $f$ and $x\in\Crit(f)$, we have that $\mu(\x)=\ind_f(x)$, where the critical point $x$ of $f$ is viewed as a constant path and $\bar{x}$ is the trivial disk. Under the action of $\Gamma$ the Conley-Zehnder grading behaves as follows: $\mu\(A\x\)=\mu(\x)+2c_1(A)$. For $H \in \CL{H}^{\pm}$ the \emph{\textbf{symplectic action functional}} $\CL{A}_H: \widetilde{\CL{L}}\to\RR$ is given by
\beq\label{equation: action functional}
\CL{A}_H(\x) \,:=\, -\int_{\dd^2} \bar{x}^*\omega+\int_{\scriptstyle\mathbb{S}^1}H(t, x(t)) \,dt.
\eeq
It is well-defined on $\widetilde{\CL{L}}$. Note that $\CL{A}_H(A\x)=\CL{A}_H(\x)+\omega(A)$ for any $A\in\Gamma$. We also note that the set  $\tPH{H}$ can be viewed as the set of critical points of  $\CL{A}_H$, see \cite[ Section $5.3$]{ABKLR}. We denote by $\Spec(H)$ the \emph{\textbf{action spectrum}} of $H$, i.e.
\beq\label{equation: action spectrum}
\Spec(H)=\left\{\CL{A}_H(\x)\in\RR\,|\, \x\in\tPH{H}\right\}.
\eeq
The action spectrum is a measure zero set of $\RR$, see \cite[ Lemma $2.2$]{Oh3}.

\subsection{Floer homology groups}
For a regular admissible  $H\in\CL{H}^{\pm}_{reg}$ , consider the free $\Lambda$-module $\tPH{H}\otimes\Lambda$. The grading on $\Lambda$ and the $\mu$-grading on $\tPH{H}$ give rise to the grading
\beq\label{equation:grading on Floer complex}
\deg:\tPH{H}\otimes\Lambda\to\ZZ,\;\;\;\deg(\x\otimes zs^{\alpha}q^m)=\mu(\x)+m.
\eeq
Denote by $R$ the $\Lambda$-submodule of $\tPH{H}\otimes\Lambda$ generated by $A\x\otimes 1 - \x\otimes s^{\omega(A)}q^{2c_1(A)}$, for all $ \x\in\tPH{H} , A\in\Gamma$. Since $\deg(A\x\otimes 1)=\deg( \x\otimes s^{\omega(A)}q^{2c_1(A)})=\mu\(A\x\)$,  we get the graded $\Lambda$-module $CF_*(H;\Lambda):=\cfrac{\tPH{H}\otimes\Lambda}{R}$.
The next step is to define a $\Lambda$-linear  differential $\partial:CF_*(H;\Lambda)\to CF_*(H;\Lambda)$ of a graded degree $-1$. This is a Morse-type differential that counts the algebraic number of isolated Floer cylinders connecting critical points of $\CL{A}_H$, i.e. elements of  $\tPH{H}$. Take $J\in\CL{J}_{\scriptstyle\mathbb{S}^1, P_{-2\ve}}$ and for each pair $\x=[x,\bar{x}]$ and $\y=[y,\bar{y}]$ in $\tPH{H}$, let $\CL{M}(\x,\y;H,J)$ be the moduli space of Floer connecting cylinders from $\x$ to $\y$, namely the set of solutions $u\in C^{\infty}(\RR\times\mathbb{S}^1,M)$ of the problem
\beq\label{equation: Floer connecting moduli space}
\left\{
\begin{aligned}
&\partial_s u + J_t(u)\(\partial_tu-X_{H_t}(u)\)=0,\\
&\lim\limits_{s\to-\infty}u(s,t)=x(t),\;\;\lim\limits_{s\to+\infty}u(s,t)=y(t),\\
&\bar{x}\#u\#(-\bar{y})\ \text{represents the zero class in}\ \Gamma.
\end{aligned}
\right\}
\eeq
The additive group $\RR$ acts on $\CL{M}(\x,\y;H,J)$ by translations
$$\RR\ni\tau:u(s,t)\mapsto u(s+\tau,t).$$ Denote by  $\hCL{M}(\x,\y;H,J)$ the quotient space $\CL{M}(\x,\y;H,J)/\RR.$
By the ``Maximum Principle", see  Corollary~\ref{maximum}, every solution of ~\eqref{equation: Floer connecting moduli space} lies in $M\minus\del M$. So we can apply the results of \cite[ Section $3$]{H-S} and \cite[ Section $3$]{Seidel}. It follows that  for generic $J\in\CL{J}_{\scriptstyle\mathbb{S}^1, P_{-2\ve}}$  the space  $\CL{M}(\x,\y;H,J)$ is a smooth manifold of dimension $\mu(\x)-\mu(\y)$. Following \cite{F-S}, such a generic $J$ will be called \emph{\textbf{$H$-regular}}. A pair $(H,J)$ will be called regular, if  $H\in\CL{H}^{\pm}_{reg}$ and $J$ is $H$-regular. The manifold $\CL{M}(\x,\y;H,J)$ carries an orientation by orienting the determinant bundle of the Cauchy-Riemann operator, see \cite[Section $6$]{FH}.  Moreover, if $\mu(\x)-\mu(\y)=1$, then  the quotient space $\hCL{M}(\x,\y;H,J)$ is a compact zero-dimensional oriented manifold. Hence it is a finite set of points,  each of which is equipped with an orientation sign $\pm 1$. Denote by $n_{H,J}(\x, \y):=\#_{\ff} \hCL{M}(\x,\y;H,J)$ its algebraic (over $\FF$) number of elements, and define the differential by
\beq\label{equation: Floer differential}
\partial(\x\otimes 1):=\sum\limits_{\substack{\y\in\tph{H}\\ \mu(\y)=\mu(\x)-1 }} n_{H,J}(\x,\y)\y\otimes 1.
\eeq
It follows from \cite[ Theorem $3.3$]{H-S} and \cite[ Corollary $3.4$]{Seidel}  that  the sum in ~\eqref{equation: Floer differential}  is a finite linear combination  of elements of $\tPH{H}$ over $\Lambda$ modulo the relation submodule $R$. Hence, $\partial(\x\otimes 1)(\mathrm{mod}\ R)\in CF_*(H;\Lambda)$ for all $\x\in\tPH{H}$.  Extending $\partial$ by $\Lambda$-linearity to the whole $CF_*(H;\Lambda)$, we get a well-defined $\Lambda$-homomorphism $$\partial:=\partial_{H,J}:CF_*(H;\Lambda)\to CF_{*-1}(H;\Lambda).$$ Repeating the original proof of Floer in \cite{F3},  we conclude that $\partial^2=0$. It follows that $(CF_*(H;\Lambda), \partial_{H,J})$ is a chain complex and its homology  $H_*(CF_*(H;\Lambda), \partial_{H,J})$ is called the Floer homology over $\Lambda$ and will be denoted by $HF_*(H,J;\Lambda)$.

Now, suppose $(H_\alpha,J_\alpha)$ and $(H_\beta,J_\beta)$ are two regular pairs. We would like to compare the corresponding $HF_*(H_\alpha,J_\alpha;\Lambda)$ and $HF_*(H_\beta,J_\beta;\Lambda)$. Let $H_s$ be a homotopy connecting $H_\alpha$ and $H_\beta$. Recall that on the tube $P_0$, the homotopy has a form $H_s=h_s(e^r)$. If $\partial_s h'_s\leq 0$ on the tube $P_0$, the continuation map is well-defined, see \cite[Section $2.9$]{Rit1}. Note that if $\partial_s h'_s=0$ on the tube $P_0$, the continuation map is an isomorphism. But for general (monotone decreasing on $P_0$) homotopy, it is only a homomorphism.
\begin{rem}
For any $k\in\ZZ$ the sets $CF_k(H;\Lambda)$ and $HF_k(H,J;\Lambda)$ are finite-dimensional vector spaces over the field $\Lambda_0=\KK_G.$ There exists a basis of $CF_*(H;\Lambda)$ consisting of elements of the form $\x\otimes q^m$, with $\x\in\tPH{H}$.
\end{rem}
The action functional $\CL{A}_H$ and the valuation $\nu$  define the following filtration on the above $\Lambda$-modules. Let $(H,J)$ be a generic pair.  Define a map
$$\ell_H:CF_*(H;\Lambda)\to\RR\cup\{-\infty\}$$
by
\beq\label{equation: filtration ell}
\begin{cases}
&\ell_H\(c=\sum\limits_{i}\x_i\otimes\lambda_i\):=max\{\CL{A}_H(\x_i)+\nu(\lambda_i)|\ \lambda_i\neq 0\},\\
&\ell_H(0)=-\infty.
\end{cases}
\eeq
Since $\ell_H(A\x\otimes 1)=\ell_H\( \x\otimes s^{\omega(A)}q^{2c_1(A)}\)=\CL{A}_H(A\x)$,  the map $\ell_H$ is well-defined.
Let $\alpha\in\RR\minus\Spec(H)$ and define a  subspace $CF^{(-\infty,\alpha)}_*(H;\Lambda)$ of $CF_*(H;\Lambda)$ by
\bea\label{equation: filtered Floer chain complex}
&CF^{(-\infty,\alpha)}_*(H;\Lambda) := \{c\in CF_*(H;\Lambda)|\ell_H(c)<\alpha\}.
\eea
For any $u\in\CL{M}(\x,\y;H,J)$, the flow energy $$E(u):=\int\limits_{\rr\times\scriptstyle\mathbb{S}^1}g_{J}(\partial_su,\partial_su)ds\wedge dt$$  equals to $\CL{A}_H(\x)-\CL{A}_H(\y)\geq0.$  Hence
\beq\label{equation: differential preserves filtration}
\ell_H(\partial c)<\ell_H(c)
\eeq
for any $c\in CF_*(H;\Lambda)$. We conclude  that the differential $\partial$ preserves this subspace. Note that if $\omega\neq 0$, the subspace $CF^{(-\infty,\alpha)}_*(H;\Lambda)$ is only an  $\FF$-submodule of  $CF_*(H;\Lambda)$. Therefore,  the quotient space
\bea\label{equation: quotient filtered Floer chain complex}
&CF^{(\alpha,\infty)}_*(H;\Lambda) :=  \frac{CF_*(H;\Lambda)}{CF^{(-\infty,\alpha)}_*(H;\Lambda)}
\eea
is well-defined, and the differential $\partial$ induces a chain complex structure on it. Define the filtered Floer homology groups by
\bea\label{equation:  filtered Floer homology}
&HF^{(-\infty,\alpha)}_*(H,J;\Lambda) := H_*(CF^{(-\infty,\alpha)}_*(H,\Lambda),\partial),\\
&HF^{(\alpha,\infty)}_*(H,J;\Lambda) := H_*(CF^{(\alpha,\infty)}_*(H,\Lambda),\partial).
\eea

Let  $\pi^\alpha:CF_*(H;\Lambda)\twoheadrightarrow CF^{(\alpha,\infty)}_*(H;\Lambda)$ be the natural projection, and let $i^\alpha:CF^{(-\infty,\alpha)}_*(H;\Lambda)\hookrightarrow CF_*(H;\Lambda)$ be the inclusion map.
Then we have induced homomorphisms in homology
\bea\label{equation: induced homomorphisms in filtered Floer homology}
&i^\alpha_*:HF^{(-\infty,\alpha)}_*(H,J;\Lambda)\to HF_*(H,J;\Lambda),\\
&\pi^\alpha_*:HF_*(H,J;\Lambda)\to HF^{(\alpha,\infty)}_*(H,J;\Lambda).
\eea
The homological exact sequence yields $\Ker\(\pi^\alpha_*\)=\mathrm{Im}\(i^\alpha_*\)$.

\subsection{Poincar\'{e}-Lefschetz duality}
For any $H\in\CL{H}^{\pm}$ let $H^{(-1)}:\mathbb{S}^1\times M\to\RR$  be  the Hamiltonian function defined by $H^{(-1)}(t,x):=-H(-t,x)$. It follows that the flow $\phi^t_{H^{(-1)}}$ generated by $H^{(-1)}$ is given by $\phi^{1-t}_H\circ(\phi^1_H)^{-1}$  and that $H^{(-1)}\in\CL{H}^{\mp}$. Moreover, we have  $H\in\CL{H}^{\pm}_{reg}\ \IFF\  H^{(-1)}\in\CL{H}^{\mp}_{reg}$. In addition, there are  bijective correspondences
$$
\CL{P}_H\ni x\; \overset{1:1}{\longleftrightarrow}\; x^{(-1)}\in\CL{P}_{H^{(-1)}},
$$
$$
\tPH{H}\ni\x=[x,\bar{x}]\; \overset{1:1}{\longleftrightarrow}\; \x^{(-1)}=\left[x^{(-1)},\bar{x}^{(-1)}\right]\in\til{\CL{P}_{H^{(-1)}}},
$$
where $x^{(-1)}(t):=x(-t),\; \bar{x}^{(-1)}(z)=\bar{x}(\bar{z})$,
and
$$
\CL{M}(\x,\y;H,J)\ni u\;\overset{1:1}{\longleftrightarrow}\; u^{(-1)}\in\CL{M}\(\y^{(-1)},\x^{(-1)};H^{(-1)},J^{(-1)}\)
$$
where $u^{(-1)}(s,t):=u(-s,-t)$ and  $J^{(-1)}(t,p):=J(-t,p)$.
In view of these correspondences, we have
$\mu\(\x^{(-1)}\)=2n-\mu(\x)$ and $\CL{A}_H(\x)=-\CL{A}_{H^{(-1)}}\(\x^{(-1)}\)$. Following the general algebraic theory of  M. Usher, see \cite{Usher2},  we conclude that the chain complexes $CF_*(H,\Lambda)$ and $CF_*(H^{(-1)},\Lambda)$ are graded filtered (Floer-Novikov) complexes, which are opposite to each other. Therefore, following \cite{E-P1}, \cite{Ostr-qmm} and \cite{Usher2}, we define several pairings between opposite complexes. Firstly, let us fix  $\Lambda$-generators of  $CF_*(H,\Lambda)$, namely $ CF_*(H,\Lambda)=\Span_{\Lambda}(\x_1,\dots, \x_s),\; s=|\CL{P}_H|$. Here, we abuse notation while writing $\x$ instead of $\x\otimes 1$.
Then, of course, $CF_*(H^{(-1)},\Lambda)=\Span_{\Lambda}\(\x^{(-1)}_1,\dots, \x^{(-1)}_s\)$, and we define an $\Lambda$-valued pairing
$\KL{L}^\#:CF_*(H,\Lambda)\times CF_*\(H^{(-1)},\Lambda\)\to\Lambda$ by extending  $\Lambda$-linearly the relation $\KL{L}^\#\(\x_i,\x^{(-1)}_j\)=\delta_{ij}$, i.e.
\beq\label{equation: KLL pairing on Floer homology}
\KL{L}^\#\(\sum_{i=1}^s\lambda_i\x_i, \sum_{j=1}^s\lambda^\prime_j\x^{(-1)}_j\)=\sum_{ij}\lambda_i\lambda^\prime_j\delta_{ij}.
\eeq
The pairing  $\KL{L}^\#$ is non-degenerate and satisfies $\KL{L}^\#\(\partial\x,\y^{(-1)}\)= \KL{L}^\#\(\x,\delta\y^{(-1)}\)$,  for any $\x\in\tPH{H},\ \y\in\til{\CL{P}_{H^{(-1)}}}$. Here $\delta:=\partial_{H^{(-1)}, J^{(-1)}}$, see \cite[ Section $1.4$]{Usher2}.  For any $k\in\ZZ$ the pairing $\KL{L}^\#$ restricts to a non-degenerate $\KK_G$-valued pairing
$$
\CL{L}^\#:=\KL{L}^\#|_{CF_k(H,\Lambda)\times CF_{2n-k}\(H^{(-1)},\Lambda\)} : CF_k(H,\Lambda)\times CF_{2n-k}\(H^{(-1)},\Lambda\)\to\KK_G.
$$
Since $CF_k(H,\Lambda)$ and  $CF_{2n-k}\(H^{(-1)},\Lambda\)$ are finite-dimensional vector spaces over the field $\KK_G$, the pairing $\CL{L}^\#$ gives rise to an isomorphism
$$
CF_k(H,\Lambda)\cong\Hom_{\kk_G}\(CF_{2n-k}\(H^{(-1)},\Lambda\),\KK_G \).
$$
By the universal coefficient theorem, we obtain the Poincar\'{e}-Lefschetz duality isomorphism
\beq\label{equation: Poincare-Lefschetz duality for Floer homology  }
HF_k(H,J;\Lambda)\cong\Hom_{\kk_G}\(HF_{2n-k}\(H^{(-1)}, J^{(-1)};\Lambda\),\KK_G \).
\eeq
The equality  $\CL{L}^\#\(\partial\x,\y^{(-1)}\)=\CL{L}^\#\(\x,\delta\y^{(-1)}\)$ implies that $\CL{L}^\#$ descends to a pairing
\beq\label{equation: CLL pairing on Floer homology}
\CL{L}:HF_k(H,J;\Lambda)\times HF_{2n-k}\(H^{(-1)}, J^{(-1)};\Lambda\)\to\KK_G,
\eeq
which is non-degenerate by the Poincar\'{e}-Lefschetz duality isomorphism.

We can go further and consider a $\FF$-valued pairing
$$
L^\# : CF_k(H,\Lambda)\times CF_{2n-k}\(H^{(-1)},\Lambda\)\to\FF
$$
defined by $L^\#:=\jmath\circ\CL{L}^\#$. Recall that  the map $\jmath:\KK_G\to\FF$ sends $\sum_{\alpha}z_{\alpha}s^{\alpha}$ to $z_0$. The pairing $L^\#$ is non-degenerate and satisfies the equation
$$
L^\#\(\partial\x,\y^{(-1)}\)=n_{H,J}(\x,\y)=n_{H^{(-1)},J^{(-1)}}(\y^{(-1)},\x^{(-1)})=L^\#\(\x,\delta\y^{(-1)}\)
$$
for any $\x\in\tPH{H},\ \y^{(-1)}\in\til{\CL{P}_{H^{(-1)}}}$. This implies that $L^\#$ descends to a $\FF$-bilinear pairing
\beq\label{equation: L pairing on Floer homology}
L:=\jmath\circ\CL{L}:HF_k(H,J;\Lambda)\times HF_{2n-k}\(H^{(-1)}, J^{(-1)};\Lambda\)\to\FF.
\eeq
The paring $L$ is obviously non-degenerate. We can consider restrictions of $L^\#$ and $L$ to the filtered Floer chain groups, which are only $\FF$-vector subspaces of the full Floer chain groups. Firstly, we note that since $\mu\(\x^{(-1)}\)=2n-\mu(\x)$ and $\CL{A}_H(\x)=-\CL{A}_{H^{(-1)}}\(\x^{(-1)}\)$ we have $L^\#|_{CF^{(-\infty,\alpha]}_k(H,\Lambda)\times CF^{(-\infty,-\alpha)}_{2n-k}\(H^{(-1)},\Lambda\) }=0$. Hence, $L^\#$ descends to the quotient $CF^{(\alpha, \infty)}_k(H,\Lambda)$, namely we obtain a well-defined $\FF$-bilinear non-degenerate pairing
$$
L^\#_{\alpha}: CF^{(\alpha, \infty)}_k(H,\Lambda)\times  CF^{(-\infty,-\alpha)}_{2n-k}\(H^{(-1)},\Lambda\)\to\FF.
$$
Once again the equation $L^\#\(\partial\x,\y^{(-1)}\)=L^\#\(\x,\delta\y^{(-1)}\)$ leads to a well-defined $\FF$-bilinear pairing in homology
\beq\label{equation: filtered L pairing on Floer homology}
L_{\alpha}:HF^{(\alpha, \infty)}_k(H,J;\Lambda)\times HF^{(-\infty,-\alpha)}_{2n-k}\(H^{(-1)}, J^{(-1)};\Lambda\)\to\FF.
\eeq
A non-trivial theorem due to M. Usher \cite[ Theorem $1.3$]{Usher2} states that $L_{\alpha}$ is non-degenerate. Note that in the case $G=\omega(\Gamma)\leq\RR$ is a discrete subgroup, this theorem was proved by M.\ Entov and L.\ Polterovich in \cite{E-P1} and by Y.\ Ostrover in \cite{Ostr-qmm}.

\subsection{The pair-of-pants products}
In order  to define  ring structures on the Floer homology groups in terms of pair-of-pants products we shall follow the method of A. Ritter developed in \cite[Section $16$]{Rit2}.

Consider the following data:

\begin{itemize}
\item[$(i)$]a Riemann sphere $(S, j)$ with two negative and one positive punctures, with a fixed choice of complex structure $j$ and a fixed choice of parametrization $z = s + it\in(-\infty, 0]\times\mathbb{S}^1$ and $z = s + it\in[0,\infty)\times\mathbb{S}^1$ respectively near the negative and positive punctures so that $j\partial_s = \partial_t$. These parametrizations will be called the cylindrical ends.
\item[$(ii)$] a closed $1$-form $\beta$ on $S$, such that  on the negative cylindrical ends $\beta = \frac12dt$ and on the positive cylindrical end $\beta = dt$ for large $|s|$. By \cite[Lemma $16.1$]{Rit2} such a form exists.
\end{itemize}

Let $(H^\pm,J^\pm)$, $H^\pm\in\CL{H}^\pm$
and  $\x^\pm=[x^\pm, \bar{x}^\pm]\in\tPH{\frac12 H^\pm}$, $\y^\pm=[y^\pm, \bar{y}^\pm]\in\tPH{\frac12 H^\pm}$, $\z^\pm=[z^\pm, \bar{z}^\pm]\in\tPH{H^\pm}$. Let $\CL{M}^{PP}\(\x^\pm, \y^\pm, \z^\pm, S, \beta\)$ be the moduli space of smooth maps $u:S\to\hat{M}$, such that

\beq\label{equation:PP moduli space }
\left\{
\begin{array}{l}
du - X_{H^\pm}\otimes\beta\ \text{is}\  (j, J^\pm)\text{-holomorphic}, \\ [0.6em]
u\ \text{converges to $x^\pm$, $y^\pm$  at the negative ends}\\ \text{and converges to $z^\pm$ at the positive end},\\ [0.6em]
(\bar{x}^\pm\cup\bar{y}^\pm)\#u\#(-\bar{z}^\pm)\ \text{represents the zero class in}\ \Gamma.
\end{array}\right.
\eeq

Recall that $du - X_{H^\pm}\otimes\beta$ is $(j, J^\pm)$-holomorphic means that
$$(du - X_{H^\pm}\otimes\beta)^{0,1}:=\frac12 \{(du - X_{H^\pm}\otimes\beta) +  J^\pm\circ(du - X_{H^\pm}\otimes\beta)\circ j\} = 0.$$
On a cylindrical end this becomes Floer's equation $\partial_su + J^\pm(\partial_tu - cX_{H^\pm}) = 0$ for the Hamiltonians $cH^\pm$, where $c\in\{\tfrac12, 1\}$ . For a generic perturbation of $(H^\pm,J^\pm)$ as in \cite[Section $16.5$]{Rit2}, the moduli space $\CL{M}^{PP}\(\x^\pm, \y^\pm, \z^\pm, S, \beta\)$ is a smooth orientable manifold of dimension

$$
\dim\CL{M}^{PP}\(\x^\pm, \y^\pm, \z^\pm, S, \beta\)=\mu(\x^\pm) + \mu(\y^\pm) -\mu(\z^\pm)-2n.
$$

By \cite[Section $16.3$]{Rit2}, we have a sharp energy estimate. Namely, for any $u\in\CL{M}^{PP}\(\x^\pm, \y^\pm, \z^\pm, S, \beta\)$, its energy $E(u)$ is defined by

$$
E(u):=\frac12\int_S\|du - X_{H^\pm}\otimes\beta\|^2\mathrm{vol}_S.
$$
Since $d\beta=0$, we have
\beq\label{equation:sharp energy estimate}
E(u)=\CL{A}_{\frac12 H^\pm}(\x^\pm) + \CL{A}_{\frac12 H^\pm}(\y^\pm) - \CL{A}_{H^\pm}(\z^\pm).
\eeq
By \cite[Section $16.7$]{Rit2} the $\CL{M}^{PP}\(\x^\pm, \y^\pm, \z^\pm, S, \beta\)$ can be compactified by means of broken Floer solutions.

The above setup allows us to repeat the classical arguments from \cite{Sch-Dissertation}, \cite{PSS} and \cite{Seidel} in order to define pair-of-pants products according to the class of Hamiltonians. Namely, for
regular pairs $(H^\pm,J^\pm)$, $H^\pm\in\CL{H}^\pm$,  we have two homomorphisms
\beq\label{equation:pair-of-pants for full Floer}
\ast_{PP}^\pm:HF_k(\tfrac12 H^\pm,J^\pm;\Lambda)\otimes
HF_l(\tfrac12 H^\pm,J^\pm;\Lambda)\to HF_{k+l-2n}(H^\pm,J^\pm;\Lambda).
\eeq
The products $\ast_{PP}^\pm$ are independent of the choices $(\beta, j,J^\pm)$ relative to the ends, see \cite[Theorem $16.10$]{Rit2}.

 \subsection{Morse-theoretical description of quantum homology algebras}\label{subsection: Morse theory}
Let $X$ be  a Liouville  vector field on $M$,  which is defined in some neighborhood of $\partial M$ and which is everywhere transverse to $\partial M$, pointing outwards. Using the flow of  $X$ we can  identify an open neighborhood $U$ of $\partial M$ with $ \partial M \times (-\ve, 0]$ for some $\ve >0$.
\par Let $f\in C^{\infty}(M)$ be a Morse function. Denote by $\Crit_k(f)$ the set of critical points of $f$ of the Morse index $\ind_f(x)=k$, and let $\Crit(f)=\bigcup_{k=0}^{2n}\Crit_k(f)$ be the set of all critical points of $f$. Fix some Riemannian metric $g$ on $M$, and let $x\in\Crit(f)$. Recall that the stable and unstable manifolds of the critical point  $x$ w.r.t. the negative gradient flow $\phi^t:=\phi^t_{-\nabla_gf}$  are the subsets
$$W^s(x;f,g):=\{y\in M|\lim\limits_{t\to+\infty}\phi^t(y)=x\}$$
$$W^u(x;f,g):=\{y\in M|\lim\limits_{t\to-\infty}\phi^t(y)=x\}.$$
We shall consider the special class of Morse functions on $M$ for which the Morse homology algebra is well-defined and isomorphic to the singular homology algebra of $M$. Let $\CL{F}^{+}(g)\subseteq C^{\infty}(M)$  be the set  of all  Morse functions $f$ on $M$, such that
\be
\item[$(U_1)$] $\Crit(f)\cap\cl{U}=\varnothing$ and the gradient vector field $\nabla_{g}f$ of $f$  is everywhere transversal to $\partial \cl{U}$,
\item[$(U_2)$] The negative gradient vector field $-\nabla_{g}f$ points outwards along $\cl{U}$.
\ee
We define also the set $\CL{F}^-(g):=-\CL{F}^+(g):=\{f\in C^{\infty}(M)|-f\in\CL{F}^+(g)\}$. If $f\in\CL{F}^{+}(g)$ and $x\in\Crit(f)$, then the stable manifold $W^s(x;f,g)$ lies in $M\minus\cl{U}$ and it is  diffeomorphic to $\RR^{2n-\ind_f(x)}$. The unstable manifold $W^u(x;f,g)$ is a smooth manifold, possibly with boundary, of dimension $\ind_f(x)$, and  the boundary  $\partial W^u(x;f,g)$ lies in $\partial M$. For a function $f\in\CL{F}^-(g)$ we have a dual picture. A function $f\in\CL{F}^\pm(g)$ will be called an \textbf{\textit{admissible Morse function}}.

Suppose that $f\in\CL{F}^{\pm}(g)$ is Morse-Smale, i.e. the stable and unstable manifolds of $f$ intersect transversally. Then, for any $x,y\in\Crit(f)$ the set of parametrized gradient trajectories $\CL{M}_{x,y}(f,g):=W^u(x;f,g)\pitchfork W^s(y;f,g)$ connecting $x$ and $y$ is a smooth oriented manifold without boundary of dimension $\ind_f(x)-\ind_f(y)$. Note that the intersection $W^u(x;f,g)\pitchfork W^s(y;f,g)$ is indeed well-defined: if $f\in\CL{F}^{+}(g)$ ($f\in\CL{F}^{-}(g)$) then the stable manifold $W^s(y;f,g)$ (the unstable manifold $W^u(x;f,g)$) lies in $M\minus\cl{U}$. The additive group  $\RR$ acts smoothly, freely and properly on $\CL{M}_{x,y}(f,g)$ by reparametrizations. It follows that the space $\hat{\CL{M}}_{x,y}(f,g):=\CL{M}_{x,y}(f,g)/\RR$ of unparametrized gradient trajectories connecting $x$ and $y$
is a smooth oriented manifold  of dimension $\ind_f(x)-\ind_f(y)-1$. If  $\ind_f(x)-\ind_f(y)-1=0$, then $\hat{\CL{M}}_{x,y}(f,g)$ is a finite set of  unparametrized  trajectories, each of which is equipped with an orientation sign $\pm 1$, see  \cite[ Corollary $2.36$]{Sch1}.  Denote by $n_{f,g}(x,y):=\#_{alg}\hat{\CL{M}}_{x,y}(f,g)$ its algebraic number of elements, and define the Morse complex $(CM_*(f),\partial)$ by
$$CM_*(f):=\Crit_*(f)\otimes\FF,\;\;\;\partial(x)=\sum_{\ind_f(x)-\ind_f(y)=1}n_{f,g}(x,y)y.$$

\begin{thm}\label{theorem:Morse homology}(E.g. \cite[ Chapter $4$]{Sch1}; \cite[ Sections $2.7$, $2.8$]{Abbond-Majer}.)\\
Let $f\in\CL{F}^{\pm}(g)$ be a Morse-Smale function.
\item[$(i)$]The Morse complex $(CM_*(f),\partial)$ is a chain complex of $\FF$-vector spaces, i.e. $\partial^2=0$. Its homology  is denoted by $H_*(f;\FF)$.
\item[$(ii)$]For $f\in\CL{F}^{+}(g)$, the Morse homology $H_*(f;\FF)$ is isomorphic to the relative singular homology $H_*(M,\partial M;\FF)$.
\item[$(iii)$]For $f\in\CL{F}^{-}(g)$, the Morse homology $H_*(f;\FF)$ is isomorphic to the  singular homology $H_*(M;\FF)$.
\end{thm}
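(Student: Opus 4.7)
The plan is to reduce each of the three assertions to standard Morse theory on a compact manifold, with the only new input being that admissibility of $f$ prevents the gradient trajectories used in the Morse complex from escaping through $\partial M$. Concretely, for $f\in\CL{F}^+(g)$ condition $(U_2)$ implies that once a negative gradient line enters the collar $U$ it remains there for all positive time; combined with $(U_1)$, which places $\Crit(f)$ in $M\minus\cl{U}$, this forces every gradient trajectory with asymptotics at two critical points to lie entirely in the compact set $M\minus U$. The corresponding statement for $f\in\CL{F}^-(g)$ follows by replacing $f$ with $-f$.

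For part (i), I would invoke the standard compactness, gluing, and coherent orientation package for Morse--Smale pairs. The previous observation guarantees that the trajectories counted by $\partial$ and the boundary strata of the compactified $1$-dimensional moduli spaces $\overline{\hat{\CL{M}}_{x,y}(f,g)}$ with $\ind_f(x)-\ind_f(y)=2$ all lie in the compact set $M\minus U$, so no sequence of trajectories can degenerate by running off to $\partial M$. Hence the closed-manifold arguments of \cite[Chapter 4]{Sch1} or \cite[Sections 2.7--2.8]{Abbond-Majer} apply verbatim and yield $\partial^2=0$.

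For part (ii), I would exhibit the unstable manifolds $\{W^u(x;f,g)\}_{x\in\Crit(f)}$ as the open cells of a CW decomposition of the pair $(M,\partial M)$. Since $-\nabla_g f$ points outward through $\partial M$, each $W^u(x;f,g)$ is an open $\ind_f(x)$-disk whose topological boundary is carried by $\partial M$ together with lower-dimensional unstable manifolds, and the Morse--Smale condition gives compatibility of the attaching maps. Any flow line that does not terminate at a critical point exits through $\partial M$ in finite time by $(U_2)$, so the complement $M\minus\bigcup_x W^u(x;f,g)$ deformation retracts onto $\partial M$. This identifies the Morse complex of $f$ with the cellular complex of $(M,\partial M)$, and cellular homology yields $H_*(f;\FF)\cong H_*(M,\partial M;\FF)$. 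I expect the verification of the cell-attaching data in the presence of $\partial M$ to be the most delicate step, but it is carried out in the references above.

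For part (iii), I would deduce the assertion from (ii) by duality. Setting $\tilde f:=-f\in\CL{F}^+(g)$, one has $\Crit(f)=\Crit(\tilde f)$ with $\ind_f(x)=2n-\ind_{\tilde f}(x)$, and a $-\nabla_g f$-trajectory from $x$ to $y$ is the time-reversal of a $-\nabla_g\tilde f$-trajectory from $y$ to $x$. Consequently $CM_*(f)$ is canonically identified with the $\FF$-linear dual of $CM_{2n-*}(\tilde f)$; taking homology, combining with part (ii) applied to $\tilde f$, and invoking Poincar\'e--Lefschetz duality over $\FF$ yields
\[
H_*(f;\FF)\cong H^{2n-*}(\tilde f;\FF)\cong H^{2n-*}(M,\partial M;\FF)\cong H_*(M;\FF).
\]
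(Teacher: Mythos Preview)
The paper does not supply its own proof of this theorem; it merely records the statement with pointers to \cite[Chapter~4]{Sch1} and \cite[\S\S 2.7--2.8]{Abbond-Majer}. Your sketch is correct and in line with what one finds in those references.

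A couple of remarks on individual steps. For~(i), your key observation---that $(U_1)$ and $(U_2)$ force every connecting trajectory between critical points to remain in the compact set $M\setminus U$---is exactly the point that reduces the problem to the closed-manifold package. For~(ii), the cellular-filtration argument via unstable manifolds is the approach taken in \cite[\S 2.8]{Abbond-Majer}; one small inaccuracy is that $(U_2)$ by itself only gives positive invariance of $U$, not finite-time exit through $\partial M$, but this is harmless: once a forward orbit enters $U$ it can never return to a critical point, and the remaining set still deformation retracts onto $\partial M$ via the flow.

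For~(iii) your duality reduction to~(ii) is valid over a field and is a perfectly legitimate shortcut. The cited references instead argue~(iii) directly, symmetrically to~(ii): for $f\in\CL{F}^-(g)$ the vector field $-\nabla_g f$ points \emph{inward} along the collar, so the negative gradient flow is forward-complete on $M$ and the unstable manifolds of $f$ (which the paper notes lie in $M\setminus\cl{U}$) assemble into a CW decomposition of a space homotopy equivalent to $M$ itself, giving $H_*(f;\FF)\cong H_*(M;\FF)$ without passing through Poincar\'e--Lefschetz duality. Your route trades that geometric repetition for an algebraic step; both are fine.
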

Next, we describe the intersection products $\bullet_i, i=1,2,3$, in Morse homology, see \cite{Abbond-Schwarz1}, \cite{Abbond-Schwarz2}, \cite{B-C3}. Choose a triple of Morse functions $f_i\in\CL{F}^{-}(g)$, for $i=1,2,3$.  After a generic perturbation of the data $\KL{F}:=\{f_1, f_2, f_3\}$ we may assume that\\

$(\KL{F}_1)$ the functions $f_i$ are Morse-Smale for $i=1,2,3$,\\

$(\KL{F}_2)$ $\Crit(f_1)\cap\Crit(f_2)=\varnothing$,\\

$(\KL{F}_3)$ the triple intersections
\begin{equation}\nonumber
\begin{aligned}
&\CL{M}^1_{x_1,x_2;x_3}\(\KL{F},g \):=W^u(x_1;f_1,g)\cap W^u(x_2;f_2,g)\cap W^s(x_3;f_3,g),\\
&\CL{M}^2_{x_1,x_2;x_3}\(\KL{F},g \):=W^u(x_1;f_1,g)\cap W^u(x_2;-f_2,g)\cap W^s(x_3;f_3,g),\\
&\CL{M}^3_{x_1,x_2;x_3}\(\KL{F},g \):=W^u(x_1;-f_1,g)\cap W^u(x_2;-f_2,g)\cap W^s(x_3;-f_3,g)
\end{aligned}
\end{equation}
are transverse for any $x_i\in\Crit(f_i), i=1,2,3$.

In this case, $\CL{M}^i_{x_1,x_2;x_3}\(\KL{F},g \)$ for $i=1, 2, 3$  are either empty or  smooth oriented manifolds without boundary of dimension
\begin{equation}\nonumber
\begin{aligned}
&\dim\CL{M}^1_{x_1,x_2;x_3}\(\KL{F},g \)=\ind_{f_1}(x_1)+\ind_{f_2}(x_2)-\ind_{f_3}(x_3)-2n,\\
&\dim\CL{M}^2_{x_1,x_2;x_3}\(\KL{F},g \)=\ind_{f_1}(x_1)+\ind_{-f_2}(x_2)-\ind_{f_3}(x_3)-2n,\\
&\dim\CL{M}^3_{x_1,x_2;x_3}\(\KL{F},g \)=\ind_{-f_1}(x_1)+\ind_{-f_2}(x_2)-\ind_{-f_3}(x_3)-2n,
\end{aligned}
\end{equation}
which lie in $M\minus\cl{U}$. The manifolds $\CL{M}^i_{x_1,x_2;x_3}\(\KL{F},g \)$ for $i=1, 2, 3$ are  compact in dimension zero, and thus consist of a finite number of points, each of which is equipped with an orientation sign $\pm 1$. Denote by
$n_{\bullet_i}(x_1, x_2; x_3):=\#_{alg}\CL{M}_{x_1,x_2;x_3}\(\KL{F},g \), i=1, 2, 3$ their algebraic number of elements. Then,\\
$(\bullet_1)$ the $\FF$-bilinear map
\bean
CM_k(f_1)\times CM_l(f_2)&\to CM_{k+l-2n}(f_3),\\ (x_1,x_2)&\mapsto\sum_{\substack{x_3\in\Crit(f_3)\\ \ind_{f_3}(x_3)=k+l-2n}}n_{\bullet_1}(x_1, x_2; x_3)x_3,
\eea
is a chain map that induces the intersection product $\bullet_1$ in  homology,\\
$(\bullet_2)$ the $\FF$-bilinear map
\bean
CM_k(f_1)\times CM_l(-f_2)&\to CM_{k+l-2n}(f_3),\\ (x_1,x_2)&\mapsto\sum_{\substack{x_3\in\Crit(f_3)\\ \ind_{f_3}(x_3)=k+l-2n}}n_{\bullet_2}(x_1, x_2; x_3)x_3,
\eea
is a chain map that induces the intersection product $\bullet_2$ in homology,\\
$(\bullet_3)$ the $\FF$-bilinear map
\bean
CM_k(-f_1)\times CM_l(-f_2)&\to CM_{k+l-2n}(-f_3),\\ (x_1,x_2)&\mapsto\sum_{\substack{x_3\in\Crit(-f_3)\\ \ind_{-f_3}(x_3)=k+l-2n}}n_{\bullet_3}(x_1, x_2; x_3)x_3,
\eea
is a chain map that induces the intersection product $\bullet_3$ in homology.

Suppose that $f\in\CL{F}^{\pm}(g)$ is Morse-Smale and $\Lambda $ is the Novikov ring as in ~\eqref{equation:  Novikov ring}. Define the Morse chain complex $(CM_*(f;\Lambda),\partial_{\Lambda})$ with coefficients in  the Novikov ring $\Lambda $ by
$$
CM_* (f;\Lambda):=\Crit_*(f)\otimes\Lambda,\;\;\partial_{\Lambda}(x\otimes\lambda)=
\sum_{\ind_f(x)-\ind_f(y)=1}n_{f,g}(x,y)y\otimes\lambda.
$$
The grading is given by $\deg(x\otimes zs^{\alpha}q^m)=\ind_f(x)+2m.$  The Morse homology $H_*(f;\Lambda)$  is isomorphic as a $\Lambda$-module either to the relative quantum homology  $QH_*(M,\partial M)$ for $f\in\CL{F}^{+}(g)$ or to the absolute quantum homology $QH_*(M)$ for $f\in\CL{F}^{-}(g)$. Let us  describe the quantum products $\ast_i$ for $i=1,2,3$ via the Morse homology. For a class $A\in\Gamma$ -- see \eqref{equation: Gamma group of spherical classes} --  and  pairwise distinct marked points $\mathbf{z}:=(z_1, z_2, z_3)\in\(\mathbb{S}^2\)^3,$ choose a generic pair $(J,\KL{H})\in\CL{JH}_{reg}(M,\partial M,\omega,A,\mathbf{z})$  of an almost complex structure and a Hamiltonian perturbation of the Cauchy-Riemann section -- see \cite[Sections 2.1.8-2.1.9]{Lanzat-Thesis}. Let $g_J$ be the Riemannian metric induced by $J$. Take a  generic Morse data
$\KL{F}:=\{f_1, f_2, f_3\}\subseteq\CL{F}^{-}(g_J)$ in the sense of  $(\KL{F}_i), i=1,2,3$, and take their critical points $x_i\in\Crit(f_i), i=1,2,3$. Define the following subspaces of the space $\CL{M}(A,J,\KL{H})$ of  $(J,\KL{H})$-holomorphic $A$-spheres:\\
$(\CL{M}^1)$ the space
\bean
\CL{M}^1_{x_1,x_2;x_3}&\(A,\KL{F},g_J \):=\\
&\(\mathrm{ev}_{\mathbf{z},J,\Kl{H}}\)^{-1}\(W^u(x_1;f_1,g_J)\times W^u(x_2;f_2,g_J)\times W^s(x_3;f_3,g_J) \)
\eea
of all  $(J,\KL{H})$-holomorphic $A$-spheres $u:\CC\PP^1\to M$ such that $$u(z_1)\in W^u(x_1;f_1,g_J),\;\;\;u(z_2)\in W^u(x_2;f_2,g_J),\;\;\;u(z_3)\in W^s(x_3;f_3,g_J),$$
$(\CL{M}^2)$ the space
\bean
\CL{M}^2_{x_1,x_2;x_3}&\(A,\KL{F},g_J \):=\\
&\(\mathrm{ev}_{\mathbf{z},J,\Kl{H}}\)^{-1}\(W^u(x_1;f_1,g_J)\times W^u(x_2;-f_2,g_J)\times W^s(x_3;f_3,g_J) \)
\eea
of all  $(J,\KL{H})$-holomorphic $A$-spheres $u:\CC\PP^1\to M$ such that $$u(z_1)\in W^u(x_1;f_1,g_J),\;\;\;u(z_2)\in W^u(x_2;-f_2,g_J),\;\;\;u(z_3)\in W^s(x_3;f_3,g_J),$$
$(\CL{M}^3)$ the space
\bean
\CL{M}^3_{x_1,x_2;x_3}&\(A,\KL{F},g_J \):=\\ &\(\mathrm{ev}_{\mathbf{z},J,\Kl{H}}\)^{-1}\(W^u(x_1;-f_1,g_J)\times W^u(x_2;-f_2,g_J)\times W^s(x_3;-f_3,g_J) \)
\eea
of all  $(J,\KL{H})$-holomorphic $A$-spheres $u:\CC\PP^1\to M$ such that $$u(z_1)\in W^u(x_1;-f_1,g_J),\;\;\;u(z_2)\in W^u(x_2;-f_2,g_J),\;\;\;u(z_3)\in W^s(x_3;-f_3,g_J).$$
Here, $\mathrm{ev}_{\mathbf{z},J,\Kl{H}}:\CL{M}(A,J,\KL{H}) \to M^3$ is the evaluation map given by $\mathrm{ev}_{\mathbf{z},J,\Kl{H}}(u)=(u(z_1),u(z_2),u(z_3))$. Since the chosen data is generic, the above spaces are smooth oriented manifolds of dimension
\begin{equation}\nonumber
\begin{aligned}
&\dim\CL{M}^1_{x_1,x_2;x_3}\(A,\KL{F},g_J \)=2c_1(A)-2n+\ind_{f_1}(x_1)+\ind_{f_2}(x_2)-\ind_{f_3}(x_3),\\
&\dim\CL{M}^2_{x_1,x_2;x_3}\(A,\KL{F},g_J \)=2c_1(A)-2n+\ind_{f_1}(x_1)+\ind_{-f_2}(x_2)-\ind_{f_3}(x_3),\\
&\dim\CL{M}^3_{x_1,x_2;x_3}\(A,\KL{F},g_J \)=2c_1(A)-2n+\ind_{-f_1}(x_1)+\ind_{-f_2}(x_2)-\ind_{-f_3}(x_3).
\end{aligned}
\end{equation}
They  are  compact in dimension zero, and thus consist of a finite number of points, each of which is equipped with an orientation sign $\pm 1$. Denote by
$n^A_{\ast_i}(x_1, x_2; x_3):=\#_{\ff}\CL{M}^i_{x_1,x_2;x_3}\(A,\KL{F},g_J \), i=1, 2, 3$, their algebraic (over $\FF$) number of elements. We then define\\
$(\ast_1)$ the $\Lambda$-bilinear map $CM_k(f_1;\Lambda)\times CM_l(f_2;\Lambda)\to CM_{k+l-2n}(f_3;\Lambda)$
$$(x_1\otimes\lambda_1,x_2\otimes\lambda_2)
\mapsto\sum_{A\in\Gamma}(x_1\ast_1 x_2)_A\otimes s^{-\omega(A)}q^{-2c_1(A)}\lambda_1\lambda_2,$$
where
$$(x_1\ast_1 x_2)_A:= \sum_{\substack{x_3\in\Crit(f_3)\\ \dim\Cl{M}^1_{x_1,x_2;x_3}\(A,\Kl{F},g_J \)=0}}n^A_{\ast_1}(x_1, x_2; x_3)x_3,$$
is a chain map that induces the intersection product $\ast_1$ in quantum  homology,\\
$(\ast_2)$ the $\Lambda$-bilinear map $CM_k(f_1;\Lambda)\times CM_l(-f_2;\Lambda)\to CM_{k+l-2n}(f_3;\Lambda)$
$$
(x_1\otimes\lambda_1,x_2\otimes\lambda_2)
\mapsto\sum_{A\in\Gamma}(x_1\ast_2 x_2)_A\otimes s^{-\omega(A)}q^{-2c_1(A)}\lambda_1\lambda_2,$$
where
$$(x_1\ast_2 x_2)_A:= \sum_{\substack{x_3\in\Crit(f_3)\\ \dim\Cl{M}^2_{x_1,x_2;x_3}\(A,\Kl{F},g_J \)=0}}n^A_{\ast_2}(x_1, x_2; x_3)x_3,$$
is a chain map that induces the intersection product $\ast_2$ in quantum  homology,\\
$(\ast_3)$ the $\Lambda$-bilinear map $CM_k(-f_1;\Lambda)\times CM_l(-f_2;\Lambda)\to CM_{k+l-2n}(-f_3;\Lambda)$
$$
(x_1\otimes\lambda_1,x_2\otimes\lambda_2)
\mapsto\sum_{A\in\Gamma}(x_1\ast_3 x_2)_A\otimes s^{-\omega(A)}q^{-2c_1(A)}\lambda_1\lambda_2,$$
where
$$(x_1\ast_3 x_2)_A:= \sum_{\substack{x_3\in\Crit(-f_3)\\ \dim\Cl{M}^3_{x_1,x_2;x_3}\(A,\Kl{F},g_J \)=0}}n^A_{\ast_3}(x_1, x_2; x_3)x_3,$$
is a chain map that induces the intersection product $\ast_3$ in quantum  homology.

\subsection{The PSS isomorphisms}
Recall that if  $f^\pm\in\CL{F}^\pm(g)$ are admissible Morse-Smale functions, then the Morse homology $H_*(f^+;\Lambda)$  is isomorphic to the relative quantum homology $QH_*(M,\partial M)$ as an $\Lambda$-algebra, and the Morse homology $H_*(f^-;\Lambda)$  is isomorphic to the absolute quantum homology $QH_*(M)$ as an $\Lambda$-algebra. Using these isomorphisms let $\ast^+:H_*(f^+;\Lambda)\otimes H_*(f^+;\Lambda)\to H_*(f^+;\Lambda)$
be the corresponding quantum product  $\ast_3$ on $QH_*(M,\partial M)$, and let
$\ast^-:H_*(f^-;\Lambda)\otimes H_*(f^-;\Lambda)\to H_*(f^-;\Lambda)$
be the corresponding quantum product  $\ast_1$ on $QH_*(M)$, see \eqref{equation: quantum intersection products}.
Following the arguments of U. Frauenfelder and F. Schlenk in \cite[ Section $4$]{F-S} we shall construct the Piunikhin-Salamon-Schwarz-type isomorphisms of $\Lambda$-algebras
$$
\Phi_{PSS}^\pm : \(H_*(f^\pm;\Lambda),\ast^\pm\)\to\(HF_*\(H^\pm,J^\pm;\Lambda\), \ast_{PP}^\pm\),
$$
see also \cite{PSS}; \cite[Section $12.1$]{MS3}.

Given two admissible almost complex structures $J^{-\infty}, J^{+\infty}\in\CL{J}_{\scriptstyle\mathbb{S}^1, P_{-2\ve}}$, consider  the space $\CL{J}_{\scriptstyle\mathbb{S}^1, P_{-2\ve}}(J^-,J^+)$ of smooth families of admissible almost complex structures, such that  $(J_s)_{s\in\rr} \in \CL{J}_{\scriptstyle\mathbb{S}^1, P_{-2\ve}}(J^{-\infty},J^{+\infty})$ if and only if $J_s\in\CL{J}_{\scriptstyle\mathbb{S}^1, P_{-2\ve}}$ for all $s\in\RR$ and there exists $s_0=s_0(J_s)>0$ such that $J_s=J^{-\infty}$ for $s \leq -s_0$ and $J_s=J^{+\infty}$ for $s \geq s_0$.

Consider regular admissible Hamiltonian $H:=H^+\in\CL{H}^+_{reg}$  and admissible Morse-Smale functions $f^+\in\CL{F}^+(g)$. There exists $\hat{H}\in\hCL{H}^+$,
such that $H = \hat{H}|_{\scriptstyle\mathbb{S}^1 \times M}$ and  $\hat{H}|_{\scriptstyle\mathbb{S}^1 \times\cl{P_0} } = h\circ e$ for some $h \in C^\infty(\RR)$ satisfying $0 \leq -h'(e^r) < \kappa$ for all $r\ge 0$. Take a smooth homotopy $\(h_s\)_{s\in\rr}\subseteq C^\infty( \RR )$, such that
\begin{itemize}
 \item[(h1)] $h_s=0, \quad s \le 0$,
 \item[(h2)] $\partial_sh_s'(e^r)\le 0$  for all $r\ge 0$ and all $s\in\mathbb{R}$,
 \item[(h3)] $h_s = h, \quad s \ge 1$,
\end{itemize}
and then choose a smooth homotopy
$(\hat{H}_s)_{s\in\rr}\subseteq C^\infty\(\mathbb{S}^1 \times \hat{M}\),$ such that
\begin{itemize}
 \item[(H1)] $\hat{H}_s = 0, \quad s \le 0$,
 \item[(H2)] $\hat{H}_s|_{\scriptstyle\mathbb{S}^1 \times \cl{P_0}} =  h_s \circ e, \quad 0\leq s\leq 1$,
 \item[(H3)] $\hat{H}_s=\hat{H}, \quad s \ge 1$.
\end{itemize}
We define  $H_s \in C^\infty(\mathbb{S}^1 \times M)$ to be $H_s:=\hat{H}_s|_{\scriptstyle\mathbb{S}^1 \times M}.$ The following important theorem was proved in \cite[Theorem $4.1$]{F-S}, see also \cite[Example $3.3$]{PSS}.
\begin{thm}  \label{thm: PSS half cyclinders}
Let $J^{+\infty}\in\CL{J}_{\scriptstyle\mathbb{S}^1, P_{-2\ve}}$ be an $H$-regular admissible almost complex structure and $J^{-\infty}\in\CL{J}_{\scriptstyle\mathbb{S}^1, P_{-2\ve}}$ be an arbitrary admissible almost complex structure.  For any $\x=[x,\bar{x}]\in\tPH{H}$ and a generic element $(J_s)_{s\in\rr} \in \CL{J}_{\scriptstyle\mathbb{S}^1, P_{-2\ve}} (J^{-\infty},J^{+\infty})$ the moduli space $\CL{M}\(\x,(H_s)_{s\in\rr}, (J_s)_{s\in\rr}\)$ of the problem
\beq\label{equation:PSS half cyclinders}
\left\{
\begin{array}{l}
u \in  C^\infty\(\RR\times\mathbb{S}^1,\hat{M}\), \\ [0.4em]
\partial_s u+\hat{J}_{s,t}(u) \left( \partial_t u-X_{\hat{H}_{s,t}}(u) \right) =0, \\ [0.4em]
\int_{\rr \times\scriptstyle\mathbb{S}^1} \left| \partial_s u \right|^2 < \infty,\\ [0.4em]
\lim\limits_{s\to+\infty}u(s,t)=x(t),\\ [0.4em]
u\#(-\bar{x})\ \text{represents the zero class in}\ \Gamma.
\end{array}\right.
\eeq
is a smooth manifold of dimension $2n-\mu(\x)$.
Here, $\hat{J}_{s,t}=\hat{J}_s(\cdot,t)$ is the extension of $J_s(\cdot,t)$ and $\hat{H}_{s,t}=\hat{H}_s(\cdot,t)$.
\end{thm}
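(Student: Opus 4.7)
The plan is to realize $\CL{M}\(\x,(H_s)_{s\in\rr}, (J_s)_{s\in\rr}\)$ as the zero set of a Fredholm section of a Banach bundle, establish transversality by genericity in $(J_s)$, and compute the Fredholm index. The only ingredient beyond the closed-manifold argument of \cite{PSS} and \cite[Chapter~12]{MS3} is confining solutions to a compact region of $\hat M$, which is handled by Corollary~\ref{maximum}.

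First I would set up the analytic framework. Fix $p>2$ and let $\CL{B}$ denote the Banach manifold of maps $u\in W^{1,p}_{\mathrm{loc}}(\RR\times\mathbb{S}^1,\hat M)$ converging exponentially to $x(t)$ as $s\to+\infty$ and having finite energy. Because $\hat H_s\equiv 0$ and $\hat J_s\equiv J^{-\infty}$ for $s\ll 0$, finite energy together with the conformal reparametrization $z=e^{2\pi(s+it)}$ and Gromov's removal of singularity imply that every such $u$ extends continuously to $z=0$; in particular $u$ itself provides a capping disk for $x$, and the topological constraint $u\#(-\bar x)=0$ in $\Gamma$ cuts out a union of connected components $\CL{B}_{\x}\subset\CL{B}$. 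Over $\CL{B}_{\x}$ form the Banach bundle $\CL{E}$ with fiber $L^p(\RR\times\mathbb{S}^1,u^*T\hat M)$ at $u$, and consider the Floer section
\[
\CL{F}(u):=\partial_s u+\hat J_{s,t}(u)\(\partial_t u-X_{\hat H_{s,t}}(u)\).
\]

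Next, the linearization $D_u:=d\CL{F}(u)$ is a Cauchy--Riemann-type operator whose asymptotic operator at $s=+\infty$ is the non-degenerate Hessian of $\CL{A}_H$ at $x$ (by $H\in\CL{H}^{+}_{reg}$), and whose asymptotic operator at $s=-\infty$ is the standard $\delbar$-operator (since $H_s=0$ and $J_s=J^{-\infty}$ there). Standard Fredholm theory and the spectral flow computation then yield
\[
\mathrm{index}(D_u)=n+\mu_{CZ}(x)=2n-\mu(\x),
\]
using the convention $\mu(\x)=n-\mu_{CZ}(x)$. Transversality follows from the universal moduli space argument once one knows that every solution $u$ has a somewhere injective point; given this, generic $(J_s)\in\CL{J}_{\scriptstyle\mathbb{S}^1, P_{-2\ve}}\(J^{-\infty},J^{+\infty}\)$ make $\CL{F}$ transverse to the zero section. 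The restriction that $J_s$ be admissible on $P_{-2\ve}$ does not obstruct this, because by the maximum principle (see below) the image of every solution is contained in the compact set $M_\ve=\hat M\minus P_\ve$, so perturbations of $J_s$ in a neighborhood of $u(\RR\times\mathbb{S}^1)$ remain within the admissible class.

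The main obstacle, specific to the convex setting, is precisely the confinement step. To invoke Corollary~\ref{maximum}(ii) on $u^{-1}\(P_{-2\ve}\)$ one needs $\partial_1\partial_2 h_s\ge 0$; this can be arranged within the constraints $(h1)$--$(h3)$ by choosing a monotone interpolation, compatibly with the already-chosen admissibility of $(\hat H_s)$. Then Theorem~\ref{sconvex} and Corollary~\ref{maximum}(ii) forbid $e\circ u$ from attaining an interior maximum on $u^{-1}\(P_{-2\ve}\)$ unless it is constant, so $u(\RR\times\mathbb{S}^1)\subset M_\ve$. With solutions confined to a compact region, strong semi-positivity excludes sphere bubbling in the dimensions relevant for the claim, and the usual implicit function theorem argument endows $\CL{M}\(\x,(H_s)_{s\in\rr},(J_s)_{s\in\rr}\)$ with the structure of a smooth manifold of dimension $2n-\mu(\x)$.
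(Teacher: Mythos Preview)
Your sketch is correct and follows exactly the approach of \cite[Theorem~4.1]{F-S} and \cite[Example~3.3]{PSS}, which the paper simply cites rather than reproving; the key step specific to the convex setting---confinement via Corollary~\ref{maximum}---is identified correctly. One small sharpening, in line with the paper's subsequent Remark: containment in $M_\ve=\hat M\setminus P_\ve$ already follows from condition~(h2) alone, since on $\overline{P_\ve}$ the Hamiltonian is constant and the equation is purely $\hat J$-holomorphic, so Corollary~\ref{maximum}(i) applies; the monotonicity hypothesis $\partial_1\partial_2 h_s\ge 0$ of Corollary~\ref{maximum}(ii) is only needed if one wants the stronger conclusion $u(\RR\times\mathbb{S}^1)\subset M$.
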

\begin{rem}\label{remark:  floer plains}
The finite energy condition, the condition $(H1)$ and the removable of singularities theorem imply that the limit $\lim_{s\to-\infty}u(s,t)$ exists, which is a point in $\hat{M}$. By the maximum principle, see Corollary~\ref{maximum}, it actually lies in $M\minus\partial M$. Moreover, since the homotopy $h_s$ satisfies  the condition $(h2)$, the maximum principle, see Corollary~\ref{maximum} - $(ii)$, implies that the map $u\#(-\bar{x})$ is a sphere in $M\minus\partial M$ for any solution $u\in\CL{M}\(\x,
(H_s)_{s\in\rr}, (J_s)_{s\in\rr}\)$.
\end{rem}

\noindent We have a well-defined smooth evaluation map $\ev:\CL{M}\(\x,(H_s)_{s\in\rr}, (J_s)_{s\in\rr}\)\to M$, $\ev(u)=\lim_{s\to-\infty}u(s,t)$. For  generic family $(J_s)_{s\in\rr}$ the map $\ev$ is transversal to every unstable manifold $W^u(p;f^+,g)$, $p\in\Crit(f^+)$. Hence for every $p\in\Crit(f^+)$ and every $\x=[x,\bar{x}]\in\tPH{H}$ the moduli space of mixed trajectories
\beq\label{equation: Phi PSS }
\CL{M}^{p\rightsquigarrow \x}_+:=\left\{(\gamma,u)\;\,\vline\;
\begin{aligned}
&\gamma\in C^{\infty}((-\infty,0],M),\\
&u\in\CL{M}(\x,(H_s)_{s\in\rr}, (J_s)_{s\in\rr}),\\
&\gamma^\prime(s)=-\nabla_gf^+(\gamma(s)), \lim_{s\to-\infty}\gamma(s)=p,\\
&\gamma(0)=\ev(u), \lim_{s\to+\infty}u(s,t)=x(t),\\
&u\#(-\bar{x})\ \text{represents the zero class in}\ \Gamma.
\end{aligned}
\right\}
\eeq
is a smooth manifold of dimension $\ind_{f^+}(p)-\mu(\x)$. It carries an orientation, see \cite{FH}. When $\ind_{f^+}(p)=\mu(\x)$ the manifold $\CL{M}^{p\rightsquigarrow \x}_+$ is zero dimensional, and by strong semi-positivity its compact, i.e. a finite set of oriented points. In this case denote by $n^{PSS}_+(p, \x):=\#_{\ff}\CL{M}^{p\rightsquigarrow \x}_+$ the algebraic (over $\FF$) number of its elements. Following \cite{PSS} and \cite{F-S} we define a $\Lambda$-module homomorphism
$
\phi_{PSS}^+:CM_*(f^+;\Lambda)\to CF_*(H^+, J^{+\infty};\Lambda)
$
by the $\Lambda$-linear extension of
\beq\label{equation: Phi PSS+ chain map}
\phi_{PSS}^+(p\otimes 1):=\sum_{\substack{\x\in\tph{H}\\\ind_{f^+}(p)=\mu(\x)}}n^{PSS}_+(p, \x)\x\otimes 1\;(\mathrm{mod}\  R).
\eeq
By  \cite[Theorem $3.3$]{H-S} and \cite[Corollary $3.4$]{Seidel},  the map is indeed a well-defined $\Lambda$-module homomorphism, which respects the grading. Recall that for the Morse chain complex $(CM_*(f^+;\Lambda),\partial_{\Lambda})$ with coefficients in  the Novikov ring $\Lambda $ we have $$\partial_{\Lambda}(p\otimes\lambda)=\sum_{\substack{q\in\Crit(f^+)\\
\ind_{f^+}(p)-\ind_{f^+}(q)=1}}n_{f^+,g}(p,q)q\otimes\lambda.$$
Since $(M,\omega)$ is strongly semi-positive, we may apply the standard gluing and compactness arguments to get that for every $p\in\Crit(f^+),\ \x=[x,\bar{x}]\in\tPH{H}$ such that $\ind_{f^+}(p)-\mu(\x)=1$
$$
\sum_{\substack{q\in\Crit(f^+)\\\ind_{f^+}(p)-\ind_{f^+}(q)=1\\\ind_{f^+}(q)=\mu(\x)}} n_{f^+,g}(p,q)n^{PSS}_+(q, \x)=
\sum_{\substack{\y\in\tph{H}\\\ind_{f^+}(p)=\mu(\y)\\ \mu(\y)-\mu(\x)=1}} n^{PSS}_+(p, \y)n_{H^+,J^{+\infty}}(\y,\x).
$$
It follows that $\phi_{PSS}^+$ intertwines the Morse and the Floer boundary operators and hence it induces a homomorphism of $\Lambda$-modules
$
\Phi_{PSS}^+ :H_*(f^+;\Lambda)\to HF_*\(H^+,J^{+\infty};\Lambda\).
$
In order to show that $\Phi_{PSS}^+$ is a $\Lambda$-module isomorphism we construct its opposite homomorphism of $\Lambda$-modules
$
\Psi^{PSS}_+ :HF_*\(H^+,J^{+\infty};\Lambda\)\to H_*(f^+;\Lambda).
$
For that matter we define the chain-homotopic inverse
$
\psi^{PSS}_+: CF_*(H^+,J^{+\infty};\Lambda)\to CM_*(f^+;\Lambda)
$
of $\phi_{PSS}^+$ as explained in \cite[Section $4$]{F-S}. Note that the closure
$$
\overline{\bigcup_{x\in\Crit(f^+)}W^s(x;f^+,g)}
$$
of the union of stable submanifolds of $f^+$ lies in $M\minus\partial M$. Choose an open neighborhood $V\subset M\minus\partial M$ of this closure.  Then choose a smooth homotopy
$(\hat{H}_s)_{s\in\rr}\subseteq \hCL{H}^+,$ such that for the restriction  $H_s:=\hat{H}_s|_{\scriptstyle\mathbb{S}^1 \times M}$  there exits $s_0<0$ for which we have
$$
H_s\big|_V = 0,\quad \text{if}\ s \le s_0\ \text{and}\ H_s=H,\quad \text{if}\ s \ge -s_0.
$$
As before, $\hat{H_s}|_{\scriptstyle\mathbb{S}^1 \times\cl{P_0} } = h_s\circ e$ for some $h_s \in C^\infty(\RR)$ satisfying $0 \leq -h_s'(e^r) < \kappa$ for all $s\in\mathbb{R}$ and all $r\ge 0$. Assume, in addition, that the homotopy $\(h_s\)_{s\in\rr}\subseteq C^\infty( \RR )$ is constant $h\in C^\infty( \RR )$, i.e. independent of the $s$-parameter and that $h'(1)<0$.

Now, for any $\x=[x,\bar{x}]\in\tPH{H}$ and a generic element $(J_{-s})_{s\in\rr} \in \CL{J}_{\scriptstyle\mathbb{S}^1, P_{-2\ve}} (J^{-\infty},J^{+\infty})$ the moduli space $\CL{M}\(\x,(H_{-s})_{s\in\rr}, (J_{-s})_{s\in\rr}\)$ of the problem
\beq\label{equation:PSS oposite half cyclinders}
\left\{
\begin{array}{l}
u \in  C^\infty\(\RR\times\mathbb{S}^1,\hat{M}\), \\ [0.4em]
\partial_s u+\hat{J}_{-s,t}(u) \left( \partial_t u-X_{\hat{H}_{-s,t}}(u) \right) =0, \\ [0.4em]
\int_{\rr \times\scriptstyle\mathbb{S}^1} \left| \partial_s u \right|^2 < \infty,\\ [0.4em]
\lim\limits_{s\to-\infty}u(s,t)=x(t),\\ [0.4em]
u\#(-\bar{x})\ \text{represents the zero class in}\ \Gamma.
\end{array}\right.
\eeq
is a smooth manifold of dimension $\mu(\x)$. As above, we have a well-defined smooth evaluation map $$\ev:\CL{M}\(\x,(H_{-s})_{s\in\rr}, (J_{-s})_{s\in\rr}\)\to M\minus\partial M,\;
\ev(u)=\lim_{s\to+\infty}u(s,t).$$  For  generic family $(J_{-s})_{s\in\rr}$ the map $\ev$ is transversal to every stable manifold $W^s(p;f^+,g)$, $p\in\Crit(f^+)$. Hence for every $p\in\Crit(f^+)$ and every $\x=[x,\bar{x}]\in\tPH{H}$ the moduli space of mixed trajectories
\beq\label{equation: Psi PSS }
\CL{M}^{\x \rightsquigarrow p}_+:=\left\{(u,\gamma)\;\vline\;
\begin{aligned}
&u\in\CL{M}\(\x,(H_{-s})_{s\in\rr}, (J_{-s})_{s\in\rr}\),\\
&\gamma\in C^{\infty}([0,+\infty),M),\\
&\gamma^\prime(s)=-\nabla_gf^+(\gamma(s)), \lim_{s\to+\infty}\gamma(s)=p,\\
&\gamma(0)=\ev(u), \lim_{s\to-\infty}u(s,t)=x(t),\\
&\bar{x}\#u\ \text{represents the zero class in}\ \Gamma.
\end{aligned}
\right\}
\eeq
is a smooth manifold of dimension $\mu(\x)-\ind_{f^+}(p)$. It carries an orientation. When $\ind_{f^+}(p)=\mu(\x)$ the manifold $\CL{M}^{\x \rightsquigarrow p}_+$ is zero dimensional, and by strong semi-positivity its compact, i.e. a finite set of oriented points. In this case denote by $n^{PSS}_+(\x, p):=\#_{\ff}\CL{M}^{\x \rightsquigarrow p}_+$ the algebraic (over $\FF$) number of its elements. Following \cite{PSS} and \cite{F-S} we define a $\Lambda$-module homomorphism
$
\psi^{PSS}_+:CF_*(H^+,J^{+\infty};\Lambda)\to CM_*(f^+;\Lambda)
$
by the $\Lambda$-linear extension of
\beq\label{equation: Psi PSS+ chain map}
\psi^{PSS}_+(\x\otimes 1):=\sum_{\substack{p\in\Crit(f^+)\\\ind_{f^+}(p)=\mu(\x)}}n^{PSS}_+(\x, p)p\otimes 1.
\eeq

By  \cite[Theorem $3.3$]{H-S} and \cite[Corollary $3.4$]{Seidel}, the map is indeed a well-defined $\Lambda$-module homomorphism, which respects the grading.  Applying the standard gluing and compactness arguments for strongly semi-positive symplectic manifolds we conclude for every $p\in\Crit(f^+),\ \x=[x,\bar{x}]\in\tPH{H}$ such that $\mu(\x)-\ind_{f^+}(p)=1$
$$
\sum_{\substack{q\in\Crit(f^+)\\\ind_{f^+}(q)-\ind_{f^+}(p)=1\\\ind_{f^+}(q)=
\mu(\x)}}n^{PSS}_+(\x,q) n_{f^+,g}(q,p)=
\sum_{\substack{\y\in\tph{H}\\\ind_{f^+}(p)=\mu(\y)\\ \mu(\x)-\mu(\y)=1}} n_{H^+,J^{+\infty}}(\x,\y)n^{PSS}_+(\y,p).
$$
It follows that $\psi^{PSS}_+$ intertwines the Morse and Floer boundary operators and hence it induces a homomorphism of $\Lambda$-modules
$\Psi^{PSS}_+ :H_*(f^+;\Lambda)\to HF_*\(H^+,J^{+\infty};\Lambda\)$. Following the arguments from \cite[Theorems $4.1$ and $5.1$]{PSS}; \cite[Sections $12.1-12.2$]{MS3}; \cite[pages $2350-2351$ ]{Alb},  we conclude that
\bean
&\Phi_{PSS}^+ : \(H_*(f^+;\Lambda),\ast^+\)\to\(HF_*\(H^+,J^{+\infty};\Lambda\), \ast_{PP}^+\)\\
&\Psi_{PSS}^+ :\(HF_*\(H^+,J^{+\infty};\Lambda\), \ast_{PP}^+\) \to\(H_*(f^+;\Lambda),\ast^+\)
\eea
are mutually opposite  $\Lambda$-algebras isomorphisms.

Repeating the above constructions for the opposite data:\\
$\bullet$ the Morse-Smale function $f^-:=-f^+\in\CL{F}^-(g)$,\\
$\bullet$ the regular Hamiltonian $H^-:=H^{(-1)}$, where $H^{(-1)}(t,x):=-H(-t,x)\in\CL{H}^-_{reg}$,\\
$\bullet$ the generic family $(J^{(-1)}_s)_{s\in\rr} \in \CL{J}_{\scriptstyle\mathbb{S}^1, P_{-2\ve}} \(\(J^{-\infty}\)^{(-1)},\(J^{+\infty}\)^{(-1)}\)$, such that $\(J^{+\infty}\)^{(-1)}$ is $H^-$-regular, where $J_s^{(-1)}(t,p):=J_s(-t,p)$,\\
we get  mutually opposite  $\Lambda$-algebras isomorphisms
\bean
&\Phi_{PSS}^- : \(H_*(f^-;\Lambda),\ast^-\)\to\(HF_*\(H^-,\(J^{+\infty}\)^{(-1)};\Lambda\), \ast_{PP}^-\)\\
&\Psi_{PSS}^- :\(HF_*\(H^-,\(J^{+\infty}\)^{(-1)};\Lambda\), \ast_{PP}^-\) \to\(H_*(f^-;\Lambda),\ast^-\).
\eea

\section{Spectral invariants}\label{subsection: spectral invariants}
Recall that $\Ham_c(M,\omega)$ is the group of  smooth compactly supported Hamiltonian diffeomorphisms of $(M,\omega)$, i.e.  the group of time-1-maps $\phi^1_H$ $$\Ham_c(M,\omega):=\{\phi=\phi^1_H|\; H\in \CL{H}_c(M)\},$$ where
$\phi^t_{H}$ is the flow generated by the time-dependent Hamiltonian vector field $X_{H_t}$ of $H$ and $\CL{H}_c(M)$ is the space of $C^\infty$-smooth compactly supported functions on $\mathbb{S}^1\times M$. Denote by $\tHam_c(M,\omega)$ the universal cover of $\Ham_c(M,\omega)$, i.e. the set of homotopy classes
$\til{\phi}:=\til{\phi^1_H}:=\left[\phi(s)\right]_{\text{rel}\ \{\id,\phi(1)=\phi^1_H\}}$ relatively to fixed ends of smooth identity-based paths $$s\mapsto\phi(s)\in C^{\infty}\(([0,1],\{0\}),(\Ham_c(M,\omega),\id)\).$$ We shall write $H\sim K$ if $\til{\phi^1_H}=\til{\phi^1_K}$. Denote also $\(QH^+_*, \ast^+\):=
\(QH_*(M,\partial M;\Lambda), \ast_3\)$ and $\(QH^-_*, \ast^-\):= \(QH_*(M;\Lambda), \ast_1\)$.

Following the papers of Viterbo \cite{V1}, Schwarz \cite{Sch} and Oh \cite{Oh1} we define two kinds of spectral invariants
\beq
c^\pm: QH^\pm_*\times\CL{H}_c(M)\to\RR,
\eeq
which descend to
\beq
c^\pm: QH^\pm_*\times\tHam_c(M,\omega)\to\RR
\eeq
as follows. First, for a regular admissible pair $(H^\pm, J^\pm)$, with $H^\pm\in\CL{H}_{reg}^\pm$,  and for $0\neq a^\pm\in QH^\pm_*$ we define
\beq\label{equation:selectors in H^pm_reg}
c^\pm(a^\pm, H^\pm):=\inf\left\{\alpha\in\RR\ | \ \Phi_{PSS}^\pm(a^\pm)\in HF_*^{(-\infty,\alpha)}(H^\pm, J^\pm;\Lambda)\right\}.
\eeq
We note that the equivalent definition of the spectral numbers is
\beq\label{equation:selectors in H^pm_reg via ell}
c^\pm(a^\pm, H^\pm):=\sup\left\{\ell_{H^\pm}\(c^\pm\)|\ c^\pm\in CF_*(H^\pm;\Lambda), [c^\pm]=\Phi_{PSS}^\pm(a^\pm)\right\}.
\eeq
In particular,
\beq\label{equation: selectors in H^pm_reg via a^[m]}
c^\pm(a^\pm, H^\pm):=\sup_m c^\pm\(\(a^\pm\)^{[m]}, H^\pm\),
\eeq
where $\(a^\pm\)^{[m]}$ is the degree-$m$-component of $a^\pm$.

For the next proposition we need the following refinement. Denote by $\CL{H}_{lin}^\pm$ the subspace of $\CL{H}^\pm$ consisting of Hamiltonians with linear completions. Namely, $H^\pm\in\CL{H}_{lin}^\pm$ if and only if the corresponding completion $\hat{H}^\pm$ restricted to the tube $P_{-2\ve}$ has a form
$$
\hat{H}^\pm\big|_{P_{-2\ve}}(x,r) = A^\pm e^r + B^\pm,\; A^\pm, B^\pm\in\RR.
$$
Note that the slope $A^\pm$ must satisfy
\beq\label{equation:small slopes}
0\leq\mp A^\pm<\kappa.
\eeq
Denote also $\CL{H}_{lin, reg}^\pm:=\CL{H}_{lin}^\pm\cap\CL{H}_{reg}^\pm$.

\begin{prop}\label{prop: well definition of c^pm}
\mbox{}\\
$(i)$ Spectral numbers $c^\pm(a^\pm, H^\pm)$ are finite.\\
$(ii)$ Spectral numbers $c^\pm(a^\pm, H^\pm)$ do not depend on the choice of $H^\pm$-regular almost complex structures $J^\pm$ .\\
$(iii)$ For $H^\pm,K^\pm\in\CL{H}_{lin, reg}^\pm$, we have
$$
c^\pm(a^\pm, H^\pm)-c^\pm(a^\pm, K^\pm)\leq\int_0^1\max_{p\in M}(H^\pm(t,p)-K^\pm(t,p))dt
$$
and
$$c^\pm(a^\pm, H^\pm)-c^\pm(a^\pm, K^\pm)\geq\int_0^1\min_{p\in M}(H^\pm(t,p)-K^\pm(t,p))dt.
$$
It follows that
$$|c^\pm(a^\pm, H^\pm)-c^\pm(a^\pm, K^\pm)|\leq\hn{H^\pm-K^\pm},
$$
where the \emph{\textbf{$L^{(1,\infty)}$-norm}} $\hn{\cdot}$ on $ C^{\infty}([0,1]\times M)$ is defined as
 $$\hn{H}:=\int\limits^1_0\left(\sup\limits_{x\in M}H(t,x)-\inf\limits_{x\in M}H(t,x)\right)dt.$$
In particular, the functions $H^\pm\mapsto c^\pm(a^\pm, H^\pm)$ are $C^0$-continuous on $\CL{H}_{lin, reg}^\pm$.
\end{prop}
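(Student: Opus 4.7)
The plan is to imitate the standard Oh--Schwarz--Viterbo arguments in \cite{Sch, Oh1, Oh Spec for general1}, leaning on three ingredients already set up in the paper: the PSS $\Lambda$-algebra isomorphism of Section 2.6, the action-decreasing property of the Floer differential~\eqref{equation: differential preserves filtration}, and the maximum principle (Corollary~\ref{maximum}), which confines all Floer and continuation solutions appearing below to the compact set $\hat{M} \minus P_\ve$. Once this is in place, the compactness, transversality and gluing of \cite{H-S, Seidel} apply verbatim, just as in \cite{F-S}.

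For (i), fix $a^\pm \neq 0$; then $\Phi_{PSS}^\pm(a^\pm)$ is a nonzero class in $HF_*(H^\pm, J^\pm;\Lambda)$. Any cycle representative is a finite $\KK_G$-linear combination of generators $\x \otimes q^m$ with $\x \in \tPH{H^\pm}$, so its $\ell_{H^\pm}$-value is a real number, giving $c^\pm(a^\pm, H^\pm) < +\infty$. For the lower bound I would establish spectrality: the infimum in~\eqref{equation:selectors in H^pm_reg} is attained and lies in $\Spec(H^\pm) + G$. The argument proceeds as in Oh, using that the relation submodule $R$ is $\ell_{H^\pm}$-homogeneous, that $\PH{H^\pm}$ is finite, and that the boundary depth of $(CF_*(H^\pm;\Lambda), \partial)$ is finite; a sequence of representatives with $\ell_{H^\pm}$ tending to $-\infty$ would then force $\Phi_{PSS}^\pm(a^\pm)$ to vanish, contradicting $a^\pm \neq 0$.

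For (ii), pick two $H^\pm$-regular almost complex structures $J_0^\pm, J_1^\pm \in \CL{J}_{\scriptstyle\mathbb{S}^1, P_{-2\ve}}$. Connectedness of this space yields a generic path $(J_s^\pm)_{s \in \rr}$, and the associated continuation chain map $\sigma : CF_*(H^\pm, J_0^\pm;\Lambda) \to CF_*(H^\pm, J_1^\pm;\Lambda)$ counts $(J_s^\pm)$-Floer cylinders for the fixed Hamiltonian $H^\pm$. Because the Hamiltonian is constant in $s$, the energy identity reads $E(u) = \CL{A}_{H^\pm}(\x_-) - \CL{A}_{H^\pm}(\x_+) \ge 0$, so $\sigma$ strictly preserves the $\ell_{H^\pm}$-filtration and induces filtered isomorphisms $HF_*^{(-\infty,\alpha)}(H^\pm, J_0^\pm;\Lambda) \to HF_*^{(-\infty,\alpha)}(H^\pm, J_1^\pm;\Lambda)$. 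A standard concatenation-of-homotopies argument shows that $\sigma$ intertwines the two PSS isomorphisms on homology, so the infimum in~\eqref{equation:selectors in H^pm_reg} is the same for $J_0^\pm$ and $J_1^\pm$.

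For (iii), I would construct a continuation chain map $\sigma : CF_*(K^\pm, J_K^\pm;\Lambda) \to CF_*(H^\pm, J_H^\pm;\Lambda)$ from a homotopy $H_s^\pm := (1-\rho(s)) K^\pm + \rho(s) H^\pm$, where $\rho:\rr \to [0,1]$ is a smooth cutoff equal to $0$ near $-\infty$ and to $1$ near $+\infty$, coupled with a generic interpolating family of admissible almost complex structures. Convexity of the defining inequalities in~\eqref{equation: preadmissible hamiltonians} keeps every $H_s^\pm$ in $\CL{H}^\pm$, so Corollary~\ref{maximum} confines continuation solutions to $\hat{M} \minus P_\ve$. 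The energy--action identity
\[
\CL{A}_{H^\pm}(\x_+) - \CL{A}_{K^\pm}(\x_-) \;=\; -E(u) + \int_{\rr\times\scriptstyle\mathbb{S}^1} \rho'(s)\(H^\pm - K^\pm\)(t, u(s,t))\, dt\, ds,
\]
bounded pointwise by $\rho'(s)\max_{p\in M}(H^\pm - K^\pm)(t,p)$ and integrated using $\int_\rr \rho'(s)\, ds = 1$, shows that $\sigma$ shifts the $\ell$-filtration by at most $\int_0^1 \max_{p\in M}(H^\pm - K^\pm)(t,p)\, dt$. Because $\sigma$ intertwines PSS by the composition rule for continuations, applying it to a near-infimizing cycle representative of $\Phi_{PSS}^\pm(a^\pm)$ yields the first inequality of (iii); interchanging the roles of $H^\pm$ and $K^\pm$ yields the second, and the $L^{(1,\infty)}$ bound (hence $C^0$-continuity on $\CL{H}^\pm_{reg}$) is their combination. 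The main obstacle is precisely this step: one must check that admissibility of $H_s^\pm$ is preserved along the convex combination so the maximum principle applies uniformly in $s$, and that the resulting chain map indeed intertwines PSS up to chain homotopy; both reduce to the convexity of the conditions in~\eqref{equation: preadmissible hamiltonians} and to the fact that solutions avoid $P_\ve$, so the standard machinery of \cite{PSS, H-S, Seidel} carries over directly.
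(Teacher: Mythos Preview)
Your proposal is correct and follows essentially the same route as the paper, whose ``proof'' is the single sentence ``Repeating verbatim the proofs of \cite[Theorem~5.3, Proposition~5.8]{Oh Spec for general1}.'' Your argument is precisely the Oh--Schwarz continuation-map argument, adapted to the convex setting via the maximum principle, so there is nothing substantively different to compare.

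One small point worth tightening: you write that admissibility of each $H_s^\pm$ ``so Corollary~\ref{maximum} confines continuation solutions to $\hat{M}\minus P_\ve$.'' Strictly speaking, case~(ii) of Corollary~\ref{maximum} requires $\partial_1\partial_2 h\ge 0$, and for the linear homotopy $h_s=(1-\rho(s))h_K+\rho(s)h_H$ one has $\partial_1\partial_2 h=\rho'(s)\bigl(h_H'-h_K'\bigr)$, which need not be nonnegative on all of $P_{-2\ve}$. The confinement to $M_\ve=\hat M\minus P_\ve$ instead follows, exactly as in the paper's treatment of the pair-of-pants product, from the fact that every admissible Hamiltonian has $h'(e^r)=0$ for $r\ge\ve$: on $P_\ve$ the homotopy is constant in the space variable, the Floer equation reduces to the Cauchy--Riemann equation, and the usual subharmonicity of $e\circ u$ (Theorem~\ref{t:convex} with $h\equiv\text{const}$) prevents interior maxima above level $e^\ve$. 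You allude to this at the end (``solutions avoid $P_\ve$''), so you clearly see the mechanism; it is just worth stating it this way rather than invoking Corollary~\ref{maximum}(ii) directly.
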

\begin{proof}
The proof of items $(i)$ and $(ii)$ follows verbatim the proof of \cite[Theorem $5.3$]{Oh Spec for general1}. The proof of $(iii)$ follows verbatim the proof of \cite[Proposition $5.8$]{Oh Spec for general1} once the corresponding continuation homomorphisms
$$
HF_*(H^\pm,J^\pm;\Lambda)\to HF_*(K^\pm,J^\pm;\Lambda)
$$
are isomorphisms. By the choice of slopes of the completions, see \eqref{equation:small slopes}, this indeed holds, see \cite[Lemma $11$]{Rit3}.
\end{proof}

\begin{cor}\label{cor:extension of selectors to compact support}
The functions $c^\pm:H^\pm\mapsto c^\pm(a^\pm, H^\pm)$ can be $C^\infty$-continuously extended to  functions $c^\pm:\CL{H}_{lin}^\pm\to\RR$, which are 1-Lipschitz w.r.t. the $L^{(1,\infty)}$-norm. In particular, $c^\pm(a^\pm, H)$ are well defined for $C^\infty$-smooth compactly supported $H\in\CL{H}_c(M)\subset\CL{H}_{lin}^+\cap\CL{H}_{lin}^-$.
\end{cor}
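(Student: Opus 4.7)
The plan is to extend $c^\pm$ by density, using that $\CL{H}^\pm_{reg}$ is a residual (hence dense) subset of $\CL{H}^\pm$ in the strong Whitney $C^\infty$-topology together with the 1-Lipschitz estimate from Proposition~\ref{prop: well definition of c^pm}$(iii)$. Since a uniformly continuous map on a dense subset of a metric space has a unique continuous extension, this will produce the desired functional automatically.

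Concretely, given $H\in\CL{H}^\pm$, I would first pick a sequence $\{H_n\}_{n\in\nn}\subset\CL{H}^\pm_{reg}$ with $H_n\to H$ in the $C^\infty$-topology. Since $M$ is compact, uniform convergence implies convergence in the $L^{(1,\infty)}$-norm: $\hn{H_n-H}\leq 2\sup_{t,x}|H_n(t,x)-H(t,x)|\to 0$. Proposition~\ref{prop: well definition of c^pm}$(iii)$ then gives $|c^\pm(a^\pm,H_n)-c^\pm(a^\pm,H_m)|\leq\hn{H_n-H_m}$, so $\{c^\pm(a^\pm,H_n)\}$ is Cauchy in $\RR$, and I would set
\begin{equation}\nonumber
c^\pm(a^\pm,H):=\lim_{n\to\infty}c^\pm(a^\pm,H_n).
\end{equation}
The limit is independent of the chosen approximating sequence: if $\{H'_n\}\subset\CL{H}^\pm_{reg}$ also converges to $H$ in $C^\infty$, then $\hn{H_n-H'_n}\to 0$, whence the same Lipschitz bound forces $|c^\pm(a^\pm,H_n)-c^\pm(a^\pm,H'_n)|\to 0$.

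Next I would verify that the 1-Lipschitz estimate passes to the extension: for $H,K\in\CL{H}^\pm$ with regular approximants $H_n\to H$ and $K_n\to K$ in $C^\infty$, passing to the limit in $|c^\pm(a^\pm,H_n)-c^\pm(a^\pm,K_n)|\leq\hn{H_n-K_n}$ yields $|c^\pm(a^\pm,H)-c^\pm(a^\pm,K)|\leq\hn{H-K}$. This gives 1-Lipschitz continuity with respect to the $L^{(1,\infty)}$-norm on $\CL{H}^\pm$, and in particular $C^\infty$-continuity. The final assertion is then immediate from the inclusion $\CL{H}_c(M)\subset\CL{H}^+\cap\CL{H}^-$ recorded in Section~\ref{subsection: admissible Hamiltonians on M}.

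The only non-routine step is verifying that the density of $\CL{H}^\pm_{reg}$ really takes place \emph{within} $\CL{H}^\pm$: the genericity statement~\eqref{e:floer:det} is classical for Hamiltonians on a closed manifold, so one must check that the perturbations producing nondegenerate $1$-periodic orbits can be chosen to preserve the shape condition $\widehat{H}|_{\scriptstyle\mathbb{S}^1\times\cl{P_0}}=h\circ e$ and the Reeb-period bound on $h'$ defining $\CL{H}^\pm$. This is legitimate because, by the Maximum Principle (Corollary~\ref{maximum}), every element of $\PH{H}$ lies in $M\minus\partial M$, so perturbations can be arranged to be compactly supported in $\mathbb{S}^1\times(M\minus\cl{P_0})$ without altering the conical end.
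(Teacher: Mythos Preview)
Your argument is correct and follows exactly the approach of the paper, which simply defines $c^\pm(a^\pm,H^\pm):=\lim_{n\to+\infty}c^\pm(a^\pm,H_n^\pm)$ for a sequence $\CL{H}^\pm_{reg}\ni H_n^\pm\overset{C^\infty}{\longrightarrow}H^\pm$ using the $C^\infty$-density of $\CL{H}^\pm_{reg}$ in $\CL{H}^\pm$. Your version is more detailed---you spell out well-definedness, the passage of the Lipschitz bound to the limit, and the reason density holds within the admissible class---but the underlying idea is identical.
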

\begin{proof}
Since $\CL{H}_{lin, reg}^\pm$ are $C^\infty$-dense in $\CL{H}_{lin}^\pm$, define
$$
c^\pm(a^\pm, H^\pm):=\lim_{n\to+\infty}c^\pm(a^\pm, H_n^\pm),
$$
for any $H^\pm\in\CL{H}_{lin}^\pm$, where
$\CL{H}_{lin, reg}^\pm\ni H_n^\pm\overset{C^\infty}{\longrightarrow}H^\pm$.
\end{proof}

\begin{prop}\label{prop: properties of c_pm on H_reg}
For any $H^\pm, K^\pm\in\CL{H}_{lin}^\pm$ and any $0\neq a^\pm, b^\pm\in QH^\pm_*$ we have the following properties of spectral numbers.

{\bf (Spectrality)}\ If $(M,\omega)$ is rational, i.e. the group $\displaystyle \omega\(H_2^S(M)\)\leq\RR$ is a discrete subgroup of $\RR$, or if $(M,\omega)$ is irrational, but the Hamiltonians $H^\pm$ are non-degenerate, then $c^\pm(a^\pm, H^\pm)\in \Spec (H^\pm)$.

{\bf (Quantum homology shift property)}\ $c^\pm (\lambda a^\pm, H^\pm)
= c^\pm (a^\pm, H^\pm) + \nu (\lambda)$ for all $\lambda \in \Lambda$, where
$\nu$ is the valuation from Definition~\ref{definition: Novikov ring Lambda}.

{\bf (Monotonicity)}\ If $H^\pm\leq K^\pm$, then $c^\pm
(a^\pm, H^\pm)\leq c^\pm (a^\pm, K^\pm)$.

{\bf ($ C^0$-continuity)}\ $|c^\pm(a^\pm, H^\pm)-c^\pm(a^\pm, K^\pm)|\leq\hn{H^\pm-K^\pm}.$

{\bf (Symplectic invariance)}\ $c^\pm(a^\pm,\psi^*H^\pm) = c^\pm (a^\pm,H^\pm)$
for every $\psi \in \Symp^0_c (M,\omega)$.

{\bf (Normalization)}\ $c^\pm (a^\pm,0) = \nu (a^\pm)$ for every $a^\pm\in QH^\pm_*$, see Definition~\ref{definition: Novikov ring Lambda}.

{\bf (Homotopy invariance)}\ If $H, K\in\CL{H}_c(M)$ and $H\sim K$ then  $c^\pm (a^\pm, H) = c^\pm (a^\pm, K)$.  Thus one can define
$c^\pm (a^\pm,\til{\phi})$ for any $\til{\phi}\in\tHam_c (M, \omega)$ as $c^\pm (a^\pm,H)$ for any  $H\in\CL{H}_c(M)$
generating $\til{\phi}$, i.e. $\til{\phi}=\til{\phi^1_H}$.

{\bf (Triangle inequality)}\ For any  $H, K\in\CL{H}_c(M)$ then
$$c^\pm (a^\pm\ast^\pm b^\pm,H\#K)\leq c^\pm (a^\pm, H) + c^\pm (b^\pm, K),$$ and thus, for any $\til{\phi}, \til{\psi}\in\tHam_c (M, \omega)$ we have
$$c^\pm (a^\pm\ast^\pm b^\pm,\til{\phi}\til{\psi} )\leq c^\pm (a^\pm, \til{\phi}) + c^\pm (b^\pm, \til{\psi}).$$

{\bf (Poincar\'e-Lefschetz duality)}\ Let $\Pi_2: QH_*^+\times QH^-_{2n-*}\to\FF$
be the non-degenerate pairing defined in ~\eqref{equation: Pi pairing on QH}. Then
$$
 c^\pm (a^\pm, H^\pm) =-\inf\left\{ c^\mp\(b^\mp, \(H^\pm\)^{(-1)}\)\ \vline\ \Pi_2(a^\pm, b^\mp)\neq 0\right\}.
$$
In particular, for any $\til{\phi}\in\tHam_c (M, \omega)$ we have
$$
 c^\pm\(a^\pm, \til{\phi}\) =-\inf\left\{ c^\mp \(b^\mp,\(\til{\phi}\)^{-1}\)\ \vline\ \Pi_2(a^\pm, b^\mp)\neq 0\right\}.
$$
\end{prop}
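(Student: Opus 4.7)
The plan is to follow the Oh--Schwarz template closely, replacing each ingredient by its convex-boundary analog established earlier in the paper. First I would dispense with the properties that are essentially formal consequences of the setup. Quantum homology shift is immediate from the $\Lambda$-linearity of $\Phi_{PSS}^\pm$ combined with the fact that multiplication by $\lambda \in \Lambda$ shifts $\ell_{H^\pm}$ by $\nu(\lambda)$. Normalization is handled by taking $H^\pm = 0$ with a $C^2$-small admissible Morse perturbation: the corresponding Floer complex reduces to the Morse complex, the PSS map becomes the canonical identification, and then $c^\pm(a^\pm, 0) = \nu(a^\pm)$ drops out of the definition \eqref{equation:selectors in H^pm_reg via ell}. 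The $C^0$-continuity is already in Proposition~\ref{prop: well definition of c^pm}(iii), and Monotonicity follows from it by the standard monotone homotopy argument: a monotone continuation map $CF_*(H^\pm) \to CF_*(K^\pm)$ strictly decreases the filtration $\ell$, so it sends the PSS representative of $a^\pm$ to a cycle of lower action representing the same class in $HF_*$ under the PSS-intertwining identification.

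Next I would handle the dynamical properties. Symplectic invariance comes from conjugating the Floer data $(H^\pm, J^\pm)$ by $\psi \in \Symp^0_c(M,\omega)$: the map $\psi_*$ on loop space gives a filtered chain isomorphism $CF_*(H^\pm, J^\pm) \to CF_*(\psi^* H^\pm, \psi^* J^\pm)$ that commutes with the PSS map (up to the Morse-side isomorphism induced by $\psi$), preserving $\CL{A}_H$ on the nose. Homotopy invariance is subtler: if $H, K \in \CL{H}_c(M)$ with $H \sim K$, then the difference induces a loop $\varphi_t$ in $\Ham_c$, and the usual argument shows $c^\pm(a^\pm, H) - c^\pm(a^\pm, K)$ takes values in $\omega(\pi_2(M))/\Gamma$; but by $C^0$-continuity along a path connecting $H$ to $K$ inside a homotopy class, this difference must be continuous in a parameter, hence constant, and then Normalization pins the constant to zero. (One first does this for regular $H, K$ and then extends by Corollary~\ref{cor:extension of selectors to compact support}.) For Spectrality in the rational case, $\Spec(H^\pm)$ is closed and nowhere dense in $\RR$, so the infimum in \eqref{equation:selectors in H^pm_reg} is automatically attained on $\Spec(H^\pm)$; in the irrational but non-degenerate case, $\tPH{H^\pm}/\Gamma$ is finite and one uses the standard compactness argument that a sequence of representing cycles of maximal action $\ell_{H^\pm}$ converging to $c^\pm$ must have top action orbit stabilizing at a critical value.

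For the Triangle inequality I would use the pair-of-pants product $\ast_{PP}^\pm$ from \eqref{equation:pair-of-pants for full Floer}. The PSS isomorphisms intertwine $\ast^\pm$ with $\ast_{PP}^\pm$ (established in the preceding subsection), and the chain-level pair-of-pants map for a suitable choice of surface data sends cycles of action at most $\alpha$ and $\beta$ in $CF_*(H)$ and $CF_*(K)$ respectively to a cycle of action at most $\alpha + \beta$ in $CF_*(H\#K)$, by the usual energy estimate on the pair-of-pants domain. Taking suprema gives the inequality. The main obstacle I anticipate is Poincaré--Lefschetz duality: here one uses the filtered non-degenerate pairing
\[
L_{\alpha}:HF^{(\alpha, \infty)}_k(H,J;\Lambda)\times HF^{(-\infty,-\alpha)}_{2n-k}\bigl(H^{(-1)}, J^{(-1)};\Lambda\bigr)\to\FF
\]
from \eqref{equation: filtered L pairing on Floer homology} (Usher's theorem). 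The point is to combine this with the fact that under the PSS isomorphisms, the pairing $L$ on Floer homology corresponds to the pairing $\Pi_2$ on quantum homology, which requires checking compatibility of $\Phi_{PSS}^+$ with $\Phi_{PSS}^-$ through the time-reversal correspondence $x \mapsto x^{(-1)}$ -- essentially that reversing Floer half-cylinders and reversing Morse trajectories matches up correctly. Once this compatibility is in hand, the equivalence $c^\pm(a^\pm, H^\pm) > \alpha$ iff $\Phi^\pm_{PSS}(a^\pm)$ survives in $HF^{(\alpha, \infty)}_*$ iff there exists $b^\mp$ with $\Phi^\mp_{PSS}(b^\mp)$ surviving in $HF^{(-\infty, -\alpha)}_*(H^{(-1)})$ and $L_\alpha$-pairing nontrivially, translates into the stated infimum formula. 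The infimum formulation on $\tHam_c$ then follows from Homotopy invariance.
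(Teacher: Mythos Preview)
Your proposal is broadly correct and tracks the Oh--Schwarz template that the paper itself follows; for most items (shift, $C^0$-continuity, normalization, symplectic invariance, triangle inequality, Poincar\'e--Lefschetz) your arguments are essentially those the paper cites, and your treatment of the duality item is in fact more explicit than the paper's, which simply invokes Usher's \cite[Corollary~1.4]{Usher2}. Two points deserve comment.

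\textbf{Monotonicity.} Your continuation-map argument has the direction reversed: with the action convention $\CL{A}_H=-\int\bar{x}^*\omega+\int H$, a monotone \emph{decreasing} homotopy from $K$ to $H$ yields a filtration-decreasing map $CF_*(K)\to CF_*(H)$, giving $c^\pm(a^\pm,H^\pm)\le c^\pm(a^\pm,K^\pm)$; the map $CF_*(H)\to CF_*(K)$ you wrote does not decrease filtration. In any case the paper bypasses this entirely, reading Monotonicity directly off the two-sided estimate in Proposition~\ref{prop: well definition of c^pm}(iii).

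\textbf{Homotopy invariance.} This is where your sketch diverges substantively from the paper and has a genuine gap. The assertion that $c^\pm(a^\pm,H)-c^\pm(a^\pm,K)$ ``takes values in $\omega(\pi_2(M))/\Gamma$'' is not justified (and the quotient by $\Gamma$ is unclear); the standard Spec-comparison that would underlie such a claim requires \emph{normalized} Hamiltonians, which compactly supported ones are not. Your continuity argument then fails in the irrational case, and the appeal to Normalization does not do what you claim. The paper instead follows Oh: one passes to the mean-normalized $\overline{H}, \overline{K}$, uses that $\Spec(\overline{H})=\Spec(\overline{K})$ when $H\sim K$, and deduces $c^\pm(a^\pm,\overline{H})=c^\pm(a^\pm,\overline{K})$ via spectrality and continuity. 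The difference $c^\pm(a^\pm,H)-c^\pm(a^\pm,\overline{H})=\int_0^1\!\int_M H\,\omega^n\,dt$ is the Calabi invariant, which is itself a homotopy invariant of $\tilde\phi$; combining these (and an approximation through $\CL{H}^\pm_{reg}$ using symplectic invariance) gives $c^\pm(a^\pm,H)=c^\pm(a^\pm,K)$. You should replace your loop argument with this normalization/Calabi route.
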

\begin{proof}

{\bf (Spectrality)}\ For $H^\pm\in\CL{H}_{lin, reg}^\pm$ it follows from the general algebraic theory of M. Usher,  see \cite[Corollary $1.5$]{Usher1}.  For degenerate $H^\pm$ ( in the rational case) we use the approximation technique due to Y.-G. Oh, see \cite[Theorem $6.4$]{Oh Spec for general1}.

{\bf (Quantum homology shift property)}\ It follows directly from the equation
$$\ell_{H^\pm}\(\Phi_{PSS}^\pm(\lambda a^\pm)\) = \ell_{H^\pm}\(\lambda\Phi_{PSS}^\pm( a^\pm)\) = \ell_{H^\pm}\(\Phi_{PSS}^\pm(a^\pm)\)+\nu(\lambda).$$

{\bf (Monotonicity)}\ It follows from Proposition~\ref{prop: well definition of c^pm}, item $(iii)$.

{\bf ($ C^0$-continuity)}\ It follows from Corollary~\ref{cor:extension of selectors to compact support}.

{\bf (Symplectic invariance)}\ It follows verbatim from  \cite[Theorem $5.9$, item $3$]{Oh Spec for general1}.

{\bf (Normalization)}\ It follows from \cite[formula $(5.14)$]{Oh1} .

{\bf (Homotopy invariance)}\ Given a Hamiltonian $H_t$, denote $\cl{H_t}:=H_t-\int_MH_t\omega^n$. Let $H^\pm, K^\pm\in\CL{H}_{lin, reg}^\pm$, such that $H^\pm\sim K^\pm$. Then $\cl{H^\pm}\sim \cl{K^\pm}$ and using the argument in  \cite[Theorem $I$]{Oh Normalization}, we get that $\Spec(\cl{H^\pm}) = \Spec(\cl{K^\pm})$. Repeating the argument  of  \cite[Theorem $6.1$]{Oh Spec for general1}, we conclude that
\bean
&c^\pm(a^\pm,\cl{H^\pm})=c^\pm(a^\pm,H^\pm)-\int_0^1\int_MH^\pm
\omega^ndt=c^\pm (a^\pm, \cl{K^\pm})\\&=c^\pm(a^\pm,K^\pm)-\int_0^1\int_MK^\pm\omega^ndt.
\eea

If $H, K\in\CL{H}_c(M)$ and $H\sim K$ then  $\cl{H}\sim\cl{K}$.
Following the proof of  \cite[Theorem $6.1$]{Oh Spec for general1}, we take sequences $\{H_n^\pm\}_{n\in\nn}, \{K_n^\pm\}_{n\in\nn}\subset
\CL{H}_{lin, reg}^\pm$, which $C^{\infty}$-converge to $H$ and $K$ respectively. Then $K\# H_n^\pm\#K^{inv}\in\CL{H}_{lin, reg}^\pm$, where $K^{inv}(t,x):=-K(t,\phi^t_K(x))$ generates $(\phi^t_K)^{-1}$,  and by the symplectic invariance we have
$$
c^\pm(a^\pm,H_n^\pm)=c^\pm(a^\pm,K\# H_n^\pm\#K^{inv}).
$$
On the other hand, since $H\sim K$,
$$
K\# H_n^\pm\#K^{inv}\sim K\# H_n^\pm\#H^{inv}.
$$
Thus
\bean
&c^\pm(a^\pm, K\# H_n^\pm\#K^{inv})-
\int_0^1\int_M(K\# H_n^\pm\#K^{inv})\omega^ndt\\
&=c^\pm(a^\pm,K\# H_n^\pm\#H^{inv})-
\int_0^1\int_M(K\# H_n^\pm\#H^{inv})\omega^ndt.
\eea
By taking the limit $n\to +\infty$, using the continuity of the spectral numbers and the fact that $H\#H^{inv}=0$, we get
$$c^\pm(a^\pm, H)-\int_0^1\int_M(K\# H\#K^{inv})\omega^ndt=c^\pm(a^\pm, K)-\int_0^1\int_MK\omega^ndt.
$$
But, $\int_0^1\int_MH\omega^ndt=\Cal(H)$ is the Calabi invariant and
$$
K\# H\#K^{inv}\sim K\# H\#H^{inv}=K
$$
implies that $\Cal(K\# H\#K^{inv})=\Cal(K)$.

{\bf (Triangle inequality)}\  Let $H, K\in\CL{H}_c(M)$. Take $\widetilde{H}^\pm, \widetilde{K}^\pm$ and $\widetilde{H\#K}^\pm\in\CL{H}_{lin, reg}^\pm$ approximating $H, K$ and $H\#K$ respectively. Moreover, if $A_{\widetilde{H}^\pm}, A_{\widetilde{K}^\pm}$ and $A_{\widetilde{H\#K}^\pm}$ are the slopes of the corresponding completions, we assume that
$$
2A_{\widetilde{H}^\pm} =2 A_{\widetilde{K}^\pm} = A_{\widetilde{H\#K}^\pm}=:A^\pm.
$$

Now take $G^\pm\in\CL{H}_{lin, reg}^\pm$ with slope $A^\pm$ of its completion. Then the continuation maps for pairs $\(\widetilde{H}^\pm, \tfrac12G^\pm\)$, $\(\widetilde{K}^\pm, \tfrac12G^\pm\)$ and $\( G^\pm, \widetilde{H\#K}^\pm\)$ are isomorphisms (actually identities),  see  \cite[Lemma $11$]{Rit3}. In particular, the pair-of-pants products $\ast_{PP}^\pm$ are also defined for a triple $\(\widetilde{H}^\pm, \widetilde{K}^\pm, \widetilde{H\#K}^\pm\)$. Moreover, by taking monotone-decreasing homotopies in the continuation construction for the pairs $\(\widetilde{H}^\pm, \tfrac12G^\pm\)$, $\(\widetilde{K}^\pm, \tfrac12G^\pm\)$ and $\( G^\pm, \widetilde{H\#K}^\pm\)$, see \cite[Section $2.9$]{Rit1},  and by gluing the corresponding continuation cylinders with a pair-of-pants surface for a triple $\(\tfrac12G^\pm, \tfrac12G^\pm, G^\pm\)$ , we get a sharp energy estimate for the triple $\(\widetilde{H}^\pm, \widetilde{K}^\pm, \widetilde{H\#K}^\pm\)$ by summing up energy estimates coming from the corresponding continuation cylinders, see \cite[Section $2.9$]{Rit1}, and from  the sharp energy estimate for the pair of pants \eqref{equation:sharp energy estimate}.\\
Now, we can repeat the argument from \cite[Section $7.4$]{Oh Spec for general1}.

{\bf (Poincar\'e-Lefschetz duality)}\ It follows from the general algebraic theory of M. Usher, see \cite[Corollary $1.4$]{Usher2}.
\end{proof}

\newpage
\subsection*{Acknowledgement.}
I am beholden to  Michael Entov, who introduced me to this subject, guided and helped me a lot. I am grateful to Michael Polyak for his valuable suggestions and comments in the course of my work on this paper. I would like to thank  Alexander F. Ritter for paying my attention to his results on Floer cohomology.  This work was carried out at Max-Planck-Institut f\"{u}r Mathematik, Bonn, and I would like to acknowledge its excellent research atmosphere and hospitality. Finally, I would like
to thank an anonymous referee for many valuable suggestions, comments and corrections.

\end{document}